\newtheorem{lemma}{Lemma}
\newtheorem{corollary}{Corollary}
\newtheorem{theorem}{Theorem}
\begin{document}
\def\rc#1{\frac{1}{#1}}
\def\cases#1{\left\{\begin{matrix} #1 \end{matrix}\right.}
\def\pmatrix#1{\left(\begin{matrix} #1 \end{matrix}\right)}
\def\nonpmatrix#1{\begin{matrix} #1 \end{matrix}}
%
\title{On the Structure of Additive Systems of Integers}%
\author{M. N. Huxley, M. C. Lettington and K. M. Schmidt \\ School of Mathematics, Cardiff University\\ Cardiff CF24 4AG, UK}

\maketitle
\begin{abstract}
A sum-and-distance system is a collection of finite sets of integers such that
the sums and differences formed by taking one element from each set generate
a prescribed arithmetic progression.
Such systems, with two component sets, arise naturally in the study of matrices
with symmetry properties and consecutive integer entries.
Sum systems are an analogous concept where only sums of elements are considered.
\par
We establish a bijection between sum systems and sum-and-distance systems of
corresponding size, and show that sum systems are equivalent to principal
reversible cuboids, which are tensors with integer entries and a symmetry of
`reversible square' type.
We prove a structure theorem for principal reversible cuboids, which gives rise
to an explicit construction formula for all sum systems in terms of joint
ordered factorisations of their component set cardinalities.
\end{abstract}

\section{Introduction}
\label{sIntro}
The present paper concerns the relationship between sum-and-distance systems
and sum systems, and their general structure, including a construction method
for all such systems. Roughly speaking, a sum-and-distance system consists of
several component sets of natural numbers such that the sums comprising one
element, or its negative, of each set generate a prescribed target
set, specifically an arithmetic progression.
More simply, a sum system consists of several sets of non-negative integers
such that the sums formed by choosing exactly one term from each set generate
a sequence of consecutive integers.
For the precise definitions, see Section \ref{sDefi} below.
\par
Two-component sum-and-distance systems arise naturally when we consider
square arrays of consecutive integers with certain symmetry properties.
The algebraic properties of square matrices with different types of symmetries
were recently explored in \cite{rSupAlg}, also giving construction formulae for the
various types. In that paper, the matrix entries were assumed to be general
real numbers, allowing the symmetry classes to form direct summands in a
${\mathbb Z}_2$-graduation of the matrix algebra over ${\mathbb R}$.
However, an additional level of complication is introduced when we require
the matrix entries to be integers or, more specifically, a consecutive
sequence of integers, such as in a magic square or a principal reversible square \cite{rOB}.
\par
A reversible square
$M$
is an
$n \times n$
matrix with the properties of column and line reversal symmetry (R) and the
vertex sum property (V) (see \cite{rSupAlg} and equations (\ref{eRcub}), (\ref{eVdef}) in Section \ref{sprc} below).
Such a matrix also has the associated symmetry that any two entries in
diametrically opposite positions with respect to the centre of the matrix add
up to the same constant
$2 w$
(see \cite{rSupAlg}, Lemma 7.1). Subtracting
$w$
from each matrix entry, we obtain a reversible square $M_0$ whose entries sum up to
$0.$
If
$n = 2 \nu$
is even,
it is then of the form
(\cite{rSupAlg} Theorem 7.2)
\begin{align}
 M_0 &= \rc 2 \pmatrix{ J(1_\nu\,a^T + b\,1_\nu^T) J & J (-1_\nu\,a^T + b\,1_\nu^T) \cr
 (1_\nu\,a^T - b\,1_\nu^T) J & -1_\nu\,a^T - b\,1_\nu^T},
\label{ewtless}\end{align}
where
$1_\nu \in {\mathbb R}^\nu$
is the vector with all entries equal to 1,
$J \in {\mathbb R}^{\nu \times \nu}$
is the matrix which has entries 1 on the anti-diagonal and 0 elsewhere, and
$a, b \in {\mathbb R}^\nu$
are arbitrary vectors.
If the reversible square
$M$
is to contain exactly the integers
$1, \dots, n^2,$
then the weight
$w$
can be found as the average of all entries,
$w = (n^2 + 1)/2;$
hence the weightless reversible square
$M_0$
will have as entries the numbers
\begin{align}
 \left\{-\frac{n^2-1}2, -\frac{n^2-1}2 + 1, \dots, \frac{n^2-1}2 - 1, \frac{n^2-1}2 \right\}.
\nonumber\end{align}
Considering that multiplication by
$J$
on the left or right just inverts the order of the rows or columns of a matrix,
respectively, we find from (\ref{ewtless}) that the sums
$\pm a_j \pm b_k,$
with
$j, k \in \{1, \dots, \nu\}$
and independently chosen signs, must generate each odd number from
$-n^2 + 1$
to
$n^2 - 1$
exactly once. In other words, the sets of entries of the vectors
$a$
and
$b$
form a two-component non-inclusive sum-and-distance system as defined in
Section \ref{sDefi} below.
\par
If
$n = 2 \nu + 1$
is odd, then the weightless reversible square
$M_0$
will have the form (\cite{rSupAlg} Theorem 7.2)
\begin{align}
 M_0 &= \pmatrix{J(1_\nu\,a^T + b\,1_\nu^T)J & J b & J(-1_\nu\,a^T + b\,1_\nu^T) \cr
 (J a)^T & 0 & - a^T \cr
 (1_\nu\,a^T - b\,1_\nu^T)J & - b & -1_\nu\,a^T - b\,1_\nu^T }
\nonumber\end{align}
with vectors
$a, b \in {\mathbb R}^\nu,$
and by the same reasoning as above, we find that, for the reversible square
$M$
to contain the integers
$1, \dots, n^2,$
the sums
$\pm a_j \pm b_k,$
where
$j, k \in \{1, \dots, \nu\}$
and the signs are chosen independently, taken together with the entries
$\pm a_j, \pm b_j$
$(j \in \{1, \dots, \nu\}),$
must generate exactly the integers
$1, 2, \dots, (n^2-1)/2$
and their negatives. In other words, the sets of entries of the vectors
$a$
and
$b$
form a two-component inclusive sum-and-distance system, as defined in
Section \ref{sDefi} below.
\par
Similarly, sum-and-distance systems appear in a certain type of rank 3
associated magic squares (cf.\ \cite{rBloRep} Theorem 9).
As shown in Lemma 3.1 and Theorem 4.1 of \cite{rSupAlg} (see also \cite{rBloRep} Theorem 1), a
$2\nu \times 2 \nu$
matrix
$M$
will have all rows and columns adding up to the same number, and also the
associated symmetry described above, if, after subtracting the weight
$w,$
it has the form
\begin{align}
 M_0 &= \rc 2 \pmatrix{J(J V^T + W J)J & J(-J V^T + W J) \cr
   (J V^T - W J)J & -J V^T - W J}
\nonumber\end{align}
with matrices
$V, W \in {\mathbb R}^{\nu \times \nu}$
whose rows add up to 0. Specifically, if
$\nu$
is even and we choose vectors
$v, w$
with entries
$\pm 1$
which, for each vector, add up to 0, and further vectors
$a, b \in {\mathbb R}^\nu,$
and set
$V = a\,v^T,$
$W = b\,w^T,$
then the resulting matrix
$M$
(after adding the weight $w = (n^2+1)/2$)
will be an associated magic square with entries
$1, \dots, n^2$
if and only if the sets of entries of
$a$
and
$b$
form a two-component non-inclusive sum-and-distance system.
\par
As a final example, we mention most perfect squares; these are square matrices
of even dimensions which, in addition to having all rows and columns adding up
to the same number, have the properties that all
$2 \times 2$
submatrices have the same sum of entries, and that all pairs of entries half
the matrix size apart on any diagonal add up to the same number.
By \cite{rSupAlg} Theorem 6.2 any
$2 \nu \times 2 \nu$
most perfect square, with even
$\nu,$
is, after subtracting the weight from each entry, of the form
\begin{align}
 M_0 &= \pmatrix{a\,\S_\nu^T + \S_\nu\,b^T & a\,\S_\nu^T - \S_\nu\,b^T \cr
  -a\,\S_\nu^T + \S_\nu\,b^T & -a\,\S_\nu^T - \S_\nu\,b^T},
\nonumber\end{align}
where
$a, b \in {\mathbb R}^\nu$
are any vectors and
$\S_\nu = (1, -1, 1, -1, \dots, 1, -1)^T \in {\mathbb R}^\nu.$
Again we see that
$M$
will have entries
$1, \dots, (2 \nu)^2$
if and only if the sets of entries of the vectors
$2 a$
and
$2 b$
form a two-component non-inclusive sum-and-distance system.
\par
Sum systems are conceptually simpler. They are directly related to
reversible cuboids, the multidimensional analogues of reversible squares and
rectangles, as shown in Theorem \ref{tssten} below.
Further, it is one of the results of the present study that sum systems are
in one-to-one correspondence with sum-and-distance systems (Theorems \ref{tSdsSs1}, \ref{tSdsSs2}).
\par
We remark that sum systems can be interpreted as discrete local coordinate
systems for a set of consecutive integers, generalising the base
$q$
decimal representation. Indeed, the integers
$0, 1, \dots, q^m-1$
can be uniquely represented in the form
\begin{align}
 \sum_{j=1}^m a_j q^{j-1},
\nonumber\end{align}
where
$a_j \in \{0, 1, \dots, q-1\},$
so the sets
\begin{align*}
&\{0, 1, 2, \dots, q-1\}, \{0, q, 2q, \dots, q^2-q\}, \{0, q^2, 2q^2, \dots, q^3-q^2\},\\
&\qquad\qquad\dots,
\{0, q^{m-1}, 2 q^{m-1}, \dots, q^m - q^{m-1}\}
\end{align*}
form an $m$-component sum system in the sense defined in Section \ref{sDefi} below.
Using this system as a basis, the $m$ entries, one taken from each component
set, which add up to a given number can be considered as that number's
discrete coordinates.
In general, sum systems will have a considerably more complicated structure
than the above simple arithmetic progressions, and it is one of the main
results of the present paper to provide a constructive description of the
general sum system (see Theorem \ref{tssbuild}).
\par
Research on some related topics has been undertaken previously, including
the study of arithmetic progressions arising in the sum of two sets of integers
\cite{rB}, \cite{rgreen}; comparing the sizes of the sum set and the difference set of a set with itself \cite{rMO}, \cite{rR}, \cite{rN};
for an overview of this subject, see \cite{rGr}.
However, it seems that despite the simplicity of the concepts, sum systems and sum-and-distance
systems, as studied here, have not attracted much attention in the mathematical
literature, and our present results are new.
\par
The paper is organised as follows.
After giving the definitions of sum-and-distance systems and sum systems in
Section \ref{sDefi}, we use a polynomial factorisation method to show in Section
\ref{sSdsSs} that there is a one-to-one relationship between sum-and-distance
systems and sum systems of suitable size.
It is fairly straightforward to see that a sum-and-distance system generates
a corresponding sum system, but the fact that every sum system arises in this
way is not obvious.
In Section \ref{sprc}, we explore the connection between $m$-component sum systems and
$m$-dimensional principal reversible cuboids, which are generalisations of Ollerenshaw and
Br\'ee's principal reversible squares \cite{rOB} from square matrices to more
general order $m$ tensors. This shows that the structure of sum systems (and
hence, by means of the bijection, of sum-and-distance systems) can be fully
understood in terms of the construction of principal reversible cuboids.
In Section \ref{sconstr}, we establish that the structure of the latter is
essentially recursive, in the sense that any principal reversible cuboid
arises from glueing offset copies of a maximal principal reversible subcuboid
together. Finally, in Section \ref{sjof} we show that, due to this recursive
property, every principal reversible cuboid can be constructed by means
of a chain of building operators with parameters arising from a joint ordered
factorisation of the cuboid's dimensions, thus linking the structure of
principal reversible cuboids with number theoretic properties of their sizes.
As a result, we obtain the general structure of the component sets of sum
systems as nested arithmetic progressions.
We conclude with some examples which illustrate how sum systems and
sum-and-distance systems arise from joint ordered factorisations.
\section{Definition of sum-and-distance systems and sum systems}
\label{sDefi}
Arithmetic progressions play a central role in the present paper.
We use the notation
\def\ap#1{\langle #1 \rangle}
\def\Ap#1{\left\langle #1 \right\rangle}
$\ap m := \{0, 1, \dots, m-1\}$
for any
$m \in {\mathbb N},$
so the arithmetic progression with start value
$a,$
step size
$s$
and
$N$
terms can be expressed as
$s \ap N + a \ (= \{a, a+s, a+2s, \dots, a+(N-1)s \}).$
\par
Note that we use the standard convention that
$A + B = \{x + y : x \in A, y \in B\}$
and
$a A + b = \{a x + b : x \in A\}$
for sets
$A, B \subset {\mathbb R}$
and
$a, b \in {\mathbb R}$
throughout.
As usual,
$A - B = A + (-B).$
We write
$|M|$
for the cardinality of a finite set
$M.$
\par\medskip
{\it Definition.}
Let
$\nu, \mu \in {\mathbb N}.$
We call a pair of sets
$\{a_1, \dots, a_\nu\}, \{b_1, \dots, b_\mu\} \subset {\mathbb N}$
a
{\it (non-inclusive) sum-and-distance system\/}
if
\begin{align}
 \{|a_j \pm b_k| : j \in \{1, \dots, \nu\}, k \in \{1, \dots, \mu\}\} &= 2 \ap{2 \nu \mu} + 1.
\nonumber\end{align}
The set of pairs is called an
{\it inclusive sum-and-distance system\/}
if
\begin{align}
 \{|a_j \pm b_k|, a_j, b_k : j \in \{1, \dots, \nu\}, k \in \{1, \dots, \mu\}\} &= \ap{2 \nu \mu + \nu + \mu} + 1.
\nonumber\end{align}
\par\medskip\noindent
The target set for a non-inclusive sum-and-distance system,
$2 \ap{2 \nu \mu} +1 = \{1, 3, 5, \dots, 4 \nu \mu -1\},$
differs from that for an inclusive sum-and-distance system,
$\ap{2 \nu \mu + \nu + \mu} + 1 = \{1, 2, \dots, 2 \nu \mu + \nu + \mu\},$
in that the former only has odd integers; this difference is
motivated by the situations outlined above in which sum-and-distance systems
arise, and the reason for it will be made transparent by Theorems \ref{tSdsSs1}, \ref{tSdsSs2}.
\par
Sum-and-distance systems can be equivalently characterised by a target set of
positive and negative numbers in the following way.
\begin{lemma}\label{lfulltarget}
Let
$\{a_1, \dots, a_\nu\}, \{b_1, \dots, b_\mu\} \subset {\mathbb N},$
$\nu, \mu \in {\mathbb N}.$
\par\medskip\noindent
(a)
These sets form a non-inclusive sum-and-distance system if and only if
\begin{align}
 \{\pm a_j \pm b_k : j \in \{1, \dots, \nu\}, k \in \{1, \dots, \mu\}\} = 2 \ap{4 \nu \mu} - 4 \nu \mu + 1,
\nonumber\end{align}
where the signs
$\pm$
are chosen independently, so there are 4 elements of the set for each pair
$(j, k).$
\par\medskip\noindent
(b)
These sets form an inclusive sum-and-distance system if and only if
\begin{align}
 \{\pm a_j \pm b_k, \pm a_j, \pm b_k, 0 : j \in \{1, \dots, \nu\}, &k \in \{1, \dots, \mu\}\}\nonumber \\
  &= \ap{(2 \nu + 1) (2 \mu + 1)} - 2 \nu \mu - \nu - \mu,
\nonumber\end{align}
where the signs
$\pm$
are chosen independently, so there are 8 elements of the set for each pair
$(j, k).$
\end{lemma}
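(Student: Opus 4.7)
The plan is to prove both parts by exploiting negation symmetry, reducing each equivalence to a bookkeeping check between a symmetric set and its non-negative half. The collection $\{\pm a_j \pm b_k\}$ is invariant under the map $x \mapsto -x$, since flipping both signs in a signed sum produces another element; and the candidate target on the right of~(a), namely the odd integers from $-(4\nu\mu-1)$ to $4\nu\mu-1$, is manifestly symmetric about~$0$. The analogous remark applies to~(b), where the signed sums together with $\pm a_j$, $\pm b_k$ and the extra $0$ form a negation-symmetric collection, and the target $\ap{(2\nu+1)(2\mu+1)} - 2\nu\mu - \nu - \mu$ is also symmetric about~$0$.

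For~(a) I would prove both implications by passing through absolute values. For each fixed $(j,k)$, the four signed sums split into antipodal pairs $\pm(a_j+b_k)$ and $\pm(a_j-b_k)$, so the absolute values appearing in $\{\pm a_j \pm b_k\}$ are $a_j + b_k$ and $|a_j - b_k|$, each attained twice. Under the original sum-and-distance condition the set $\{|a_j \pm b_k|\}$ omits~$0$, forcing $a_j \neq b_k$; consequently the four signed values attached to each $(j,k)$ are genuinely distinct. The $4\nu\mu$ signed sums are then pairwise distinct, and by negation symmetry they fill exactly the $4\nu\mu$ odd integers in $[-(4\nu\mu-1),\, 4\nu\mu-1]$. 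The converse is immediate on taking absolute values and restricting to the positive half.

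Part~(b) is analogous, with the terms $\pm a_j$, $\pm b_k$ and $0$ folded into the bookkeeping: the signed-sum side has $4\nu\mu + 2\nu + 2\mu + 1 = (2\nu+1)(2\mu+1)$ putative elements, matching the target cardinality. The inclusive condition equates a $(2\nu\mu + \nu + \mu)$-element set of positive integers with the initial segment $\{1, \dots, 2\nu\mu + \nu + \mu\}$, and thus enforces all the required pairwise distinctness; in particular $a_j \neq b_k$, and no $a_j$ or $b_k$ coincides with any $|a_{j'} \pm b_{k'}|$. Combined with negation symmetry and restriction to absolute values, this gives the equivalence. The only step requiring any real attention is this distinctness check; the structural content, that a negation-symmetric subset of the integers is determined by its intersection with the non-negative integers, is a one-line observation, and the remainder is counting.
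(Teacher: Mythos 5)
Your proposal is correct and follows essentially the same route as the paper: both observe that $\{\pm a_j \pm b_k\}$ is the negation-symmetric extension of $\{|a_j \pm b_k|\}$ (and similarly for the inclusive case with $\pm a_j$, $\pm b_k$, $0$ adjoined), and match this against the negation-symmetric extension of the target set. The distinctness bookkeeping you dwell on is not strictly needed for the set-equality argument the paper gives, but it does no harm.
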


\begin{proof}
(a)
The sums
$\pm a_j \pm b_k$
will give exactly the sums and absolute distances
$|a_j \pm b_k|$
and their negatives
$-|a_j \pm b_k|,$
so the resulting set will be the union of the target set of the non-inclusive sum-and-distance system with its negative; this can be written as the step-2
arithmetic progression on the right-hand side.
\par\medskip\noindent
(b)
The sums
$\pm a_j \pm b_k$
give the same results as in (a), and including the elements
$\pm a_j$
and
$\pm b_k,$
we obtain the union of the target set of the inclusive sum-and-distance system with its
negative. By adding the element 0 to the set, we can complete this to the
arithmetic progression on the right-hand side.
\end{proof}

The above lemma motivates the following generalisation.
\par\medskip
{\it Definition.}
Let
$m \in {\mathbb N}$
and
$A_j \subset {\mathbb N}$
$(j \in \{1, \dots, m\}).$
Then we call
$A_1, A_2, \dots, A_m$
a
{\it (non-inclusive) $m$-part sum-and-distance system\/}
if
\begin{align}
 \sum_{j=1}^m (A_j \cup (-A_j)) &= 2 \Ap{2^m \prod_{j=1}^m |A_j|} - 2^m  \prod_{j=1}^m |A_j| + 1.
\nonumber\end{align}
We call
$A_1, A_2, \dots, A_m$
an
{\it inclusive $m$-part sum-and-distance system\/}
if
\begin{align}
 \sum_{j=1}^m (A_j \cup \{0\} \cup (-A_j)) &= \Ap{\prod_{j=1}^m (2|A_j|+1)} - \rc 2 \left(\prod_{j=1}^m (2|A_j|+1) - 1 \right).
\nonumber\end{align}
\par\medskip
{\it Definition.}
Let
$n_1, n_2 \in {\mathbb N}+1.$
\hfill\break
We call a pair of sets
$A = \{a_1, \dots, a_{n_1}\}, B = \{b_1, \dots, b_{n_2}\} \subset {\mathbb N}_0$
a
{\it sum system\/}
if
\begin{align}
 A + B &= \ap{n_1 n_2},
\nonumber\end{align}
i.e. in explicit form,
\begin{align}
 \{a_j + b_k : j \in \{1, \dots, n_1\}, k \in \{1, \dots, n_2\}\} &= \{0, 1, \dots, n_1 n_2 - 1\}.
\nonumber\end{align}
More generally, we call a collection of
$m$
sets
$A_1, A_2, \dots, A_m \subset {\mathbb N}_0,$
each of cardinality at least 2,
an
{\it $m$-part sum system\/}
if
\begin{align}
 \sum_{k=1}^m A_k &= \Ap{\prod_{k=1}^m \left| A_k \right|};
\nonumber\end{align}
note that
\begin{align}
 \sum_{k=1}^m A_k = \left\{ \sum_{k=1}^m a_k : a_k \in A_k \ (k \in \{1, \dots, m\})\right \}.
\nonumber\end{align}
\par\medskip\noindent
Since the number 0 in the target set
can only arise as a sum of 0s, as all numbers in the sets are non-negative,
it follows that each component set of a sum system contains the number 0.
\section{Correspondence between sum-and-distance systems and sum systems}
\label{sSdsSs}
Given a finite set
$M \subset {\mathbb N}_0,$
we can associate with it the polynomial
\begin{align}
 p_M(x) &= \sum_{j \in M} x^j.
\label{epoly}\end{align}
More generally, for a finite set
$M \in {\mathbb Z},$
we have an associated Laurent polynomial (\ref{epoly}) which may include negative
powers.
\par
Specifically for the arithmetic progression
$M = s \ap N + a,$
where
$s, N \in {\mathbb N}$
and
$a \in {\mathbb N}_0,$
we find
\begin{align}
 p_{s \ap N + a}(x) &= \sum_{j=0}^{N-1} x^{a+j s} = x^a \sum_{j=0}^{N-1} (x^s)^j.
\nonumber\end{align}
Clearly this polynomial has root
$0$
with multiplicity
$a;$
it is also evident that
$1$
is not a root, nor are the other $s$th roots of unity.
Hence, to find the further roots of this polynomial, we may assume
$x^s \neq 1$
and observe that
\begin{align}
 p_{s \ap N + a}(x) &= x^a \sum_{j=0}^{N-1} (x^s)^j = x^a\,\frac{1 - (x^s)^N}{1 - x^s} = x^a\,\frac{1 - x^{s N}}{1 - x^s},
\nonumber\end{align}
which shows that the non-zero roots of
$p_{s \ap N + a}$
are exactly the ($s N$)th roots of unity which are not $s$th roots of unity; in particular, they
all lie on the complex unit circle.
\par\medskip
{\it Definition.}
Let
$m \in {\mathbb N}.$
A polynomial
$p$
of degree
$d$
is called
{\it palindromic\/}
if
it is equal to its reciprocal polynomial, i.e.\ if
$p(x) = x^{d} p(\rc x),$
so
\begin{align}
 p(x) = \sum_{j=0}^{d} \alpha_j\, x^j
\nonumber\end{align}
with
$\alpha_j = \alpha_{d - j}$
$(j \in \{0, \dots, d\}).$
\par\medskip\noindent
The results of this section will rely on the following key observation.
\begin{lemma}\label{lkey}
Let
$p$
be a polynomial with real coefficients and with all its roots situated on the complex
unit circle.
\begin{description}
\item{(a)}
If all roots of
$p$
are non-real, then
$p$
is palindromic and of even degree.
\item{(b)}
If all roots of
$p$
are non-real except for the simple root
$-1,$
then
$p$
is palindromic and of odd degree.
\end{description}
\end{lemma}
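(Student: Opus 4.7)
\bigskip

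\noindent\textbf{Proof plan for Lemma \ref{lkey}.}

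The key observation is that if $\zeta$ is a non-real root on the unit circle, then its complex conjugate $\bar\zeta$ is also a root (since $p$ has real coefficients), and $\zeta\bar\zeta = |\zeta|^2 = 1$. Therefore the quadratic factor
\begin{align*}
 (x-\zeta)(x-\bar\zeta) = x^2 - 2\mathrm{Re}(\zeta)\,x + 1
\end{align*}
is palindromic (of even degree $2$). This will be the building block for both parts.

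For part (a), I would first note that since every non-real root appears paired with its conjugate, the roots can be grouped into such conjugate pairs, yielding a factorisation
\begin{align*}
 p(x) = c \prod_{k} (x^2 - 2\mathrm{Re}(\zeta_k)\,x + 1)
\end{align*}
for some leading real constant $c$, where the product runs over all conjugate pairs of roots (counted with multiplicity). This makes the degree of $p$ even. Then I would verify the simple fact that a product of palindromic polynomials is palindromic: if $p_1, p_2$ are palindromic of degrees $d_1, d_2$, then $(p_1 p_2)(x) = x^{d_1}p_1(1/x)\cdot x^{d_2}p_2(1/x) = x^{d_1+d_2}(p_1 p_2)(1/x)$. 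A constant is trivially palindromic of degree $0$, so $p$ itself is palindromic.

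For part (b), the single real root $-1$ is simple, so $(x+1)$ divides $p$ exactly once, and the quotient $q := p/(x+1)$ has real coefficients (by polynomial division), all its roots lie on the unit circle, and none of them is real (otherwise $p$ would have a further real root beyond $-1$). By part (a), $q$ is palindromic of even degree. Since $(x+1)$ is itself palindromic of degree $1$, the product $p = (x+1)\,q$ is palindromic of odd degree by the same product observation used in (a).

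There is no substantive obstacle: once the identity $\zeta\bar\zeta = 1$ is recognised as forcing each conjugate pair to contribute a palindromic quadratic, everything reduces to checking that palindromicity is preserved under products, and to handling the single real root $-1$ by factoring it out.
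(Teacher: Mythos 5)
Your proof is correct, and while part (a) rests on the same key fact as the paper's argument (a conjugate pair $\zeta,\bar\zeta$ on the unit circle satisfies $\zeta\bar\zeta=1$, so each pair contributes a palindromic quadratic $x^2-2\mathrm{Re}(\zeta)x+1$), your packaging is noticeably more modular, and your part (b) takes a genuinely cleaner route. The paper proves (a) by manipulating the whole product $\prod_j(x-r_j)(x-\bar r_j)$ in a single chain of identities into $x^{2m}p(1/x)$; you instead isolate the reusable fact that a product of palindromic polynomials is palindromic, and you are also slightly more careful in carrying the leading coefficient $c$, which the paper's product formula implicitly takes to be $1$. The payoff comes in (b): the paper factors $p=(x+1)\tilde p$ and then grinds through an explicit coefficient recursion $\alpha_j=\tilde\alpha_j+\tilde\alpha_{j+1}$ both to confirm that $\tilde p$ has real coefficients and to transfer palindromicity from $\tilde p$ to $p$. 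You simply note that polynomial division by the real polynomial $x+1$ keeps coefficients real, apply (a) to the quotient $q$, observe that $x+1$ is itself palindromic of degree $1$, and invoke the same multiplicativity lemma to conclude palindromicity and odd degree of $p=(x+1)q$ with no bookkeeping. The paper's approach is more self-contained at each step; yours factors out the common mechanism once and reuses it, which is shorter and less error-prone.
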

\begin{proof}
(a)
As the polynomial has real coefficients, its (non-real) roots come in complex
conjugate pairs, say
$\{r_j, \overline{r_j} \mid j \in \{1, \dots, m\}\}.$
Thus
\begin{align}
 p(x) &= \prod_{j=1}^m (x - r_j) (x - \overline{r_j})
 = x^{2m} \prod_{j=1}^m \left(1 - \frac{r_j}x\right) \left(1 - \frac{\overline{r_j}}x\right)
\nonumber\\
 &= x^{2m} \prod_{j=1}^m r_j\,\overline{r_j}\,\left(\rc{r_j} - \rc x\right) \left(\rc{\overline{r_j}} - \rc x\right)
 = x^{2m} \prod_{j=1}^m \left(\rc x - \overline{r_j}\right) \left(\rc x - r_j\right)
\nonumber\\
 &= x^{2m}\,p\left(\rc x\right),
\nonumber\end{align}
with
$2m$
the degree of the polynomial, so
$p$
is palindromic.
\par\medskip\noindent
(b)
The polynomial
$p$
can be factorised as
$p(x) = (x + 1) \tilde p(x),$
where
$\tilde p$
only has non-real roots situated on the unit circle. Writing
\begin{align}
 p(x) = \sum_{j=0}^{d} \alpha_j\,x^j, \qquad & \tilde p(x) = \sum_{j=0}^{d-1} \tilde \alpha_j\,x^j,
\nonumber\end{align}
where
$d$
is the degree of the polynomial
$p,$
a straightforward calculation gives
\begin{align}
 \alpha_j &= \cases{\tilde \alpha_0 & if j=0, \cr \tilde \alpha_j + \tilde \alpha_{j+1} & if j \in \{1, \dots, d-1\}, \cr \tilde \alpha_{d-1} & if j = d;}
\label{ecoefform}\end{align}
and hence it follows by recursion that
$\tilde \alpha_j \in {\mathbb R}$
$(j \in \{0, \dots, d-1\}),$
since
$p$
has real coefficients. Therefore we can apply part (a) to find that
$\tilde p$
is palindromic of even degree, i.e.
$\tilde \alpha_j = \tilde \alpha_{d-1-j}$
$(j \in \{0, \dots, d-1\}).$
Hence
$p$
is of odd degree, and we deduce from (\ref{ecoefform}) that
\begin{align}
 \alpha_{d} &= \tilde \alpha_{d-1} = \tilde \alpha_0 = \alpha_0,
\nonumber\\
 \alpha_j &= \tilde \alpha_j + \tilde \alpha_{j-1} = \tilde \alpha_{d-1-j} + \tilde \alpha_{d-1-j+1} = \alpha_{d-j}
 \quad (j \in \{1, \dots, d-1\}),
\nonumber\end{align}
so
$p$
is palindromic.
\end{proof}
Using this result, we can show that the component sets of sum systems
always have a palindromic structure, too, in the following sense.
\begin{theorem}\label{tsspalin}
Let
$m \in {\mathbb N}.$
Suppose the sets
$A_1, A_2, \dots, A_m \subset {\mathbb N}_0$
form an $m$-part sum system. Then, for each
$j \in \{1, \dots, m\},$
\begin{align}
 A_j = (\max A_j) - A_j,
\nonumber\end{align}
i.e.
$x \in A_j$
if and only if
$(\max A_j - x) \in A_j.$
\par
Moreover,
if all component sets
$A_j$
have odd cardinality, then
$\max A_j$
is even for every
$j \in \{1, \dots, m\};$
if at least one component set has even cardinality, then
$\max A_j$
is odd for exactly one
$j \in \{1, \dots, m\}.$
\end{theorem}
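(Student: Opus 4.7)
The plan is to encode the sum system condition polynomially and invoke Lemma \ref{lkey}. Setting $N = \prod_{j=1}^m |A_j|$ and using that the sum $\sum_{j=1}^m A_j = \ap N$ is achieved with each decomposition unique (since the cardinalities multiply to exactly $N$), the generating polynomials satisfy
\begin{align*}
 \prod_{j=1}^m p_{A_j}(x) &= p_{\ap N}(x) = \sum_{k=0}^{N-1} x^k = \frac{x^N-1}{x-1}.
\end{align*}
The roots of the right-hand side are exactly the $N$th roots of unity distinct from $1$, all simple and all on the complex unit circle. Consequently every root of each factor $p_{A_j}$ lies on the unit circle and differs from $1$. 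Since the coefficients of $p_{A_j}$ are real (in fact in $\{0,1\}$), its non-real roots pair up as complex conjugates; the only real root available is therefore $-1$.

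For the palindromic claim, I split into two cases according to whether $-1$ is a root of $p_{A_j}$. If it is not, then all roots of $p_{A_j}$ are non-real on the unit circle, and Lemma \ref{lkey}(a) gives that $p_{A_j}$ is palindromic of even degree. If $-1$ is a root, it must be simple, because $-1$ is a simple root of the product $p_{\ap N}$; Lemma \ref{lkey}(b) then gives that $p_{A_j}$ is palindromic of odd degree. In either case, writing $d = \deg p_{A_j} = \max A_j$, the symmetry of the coefficients of $p_{A_j}$ translates directly into the assertion that $k \in A_j$ if and only if $d - k \in A_j$, i.e.\ $A_j = \max A_j - A_j$.

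For the parity statement, the key observation is that $\max A_j = \deg p_{A_j}$ is odd precisely when $-1$ occurs as a root of $p_{A_j}$, because the non-real roots contribute an even number to the degree. Now $-1$ is a root of $\prod_{j=1}^m p_{A_j}(x) = \sum_{k=0}^{N-1}x^k$ if and only if $N$ is even, in which case it appears with multiplicity $1$. If all $|A_j|$ are odd, then $N$ is odd, so $-1$ is a root of no $p_{A_j}$, and every $\max A_j$ is even. If at least one $|A_j|$ is even, then $N$ is even, and the single root $-1$ of the product must come from exactly one factor $p_{A_j}$, so $\max A_j$ is odd for exactly that one index.

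The most delicate step is the transition from the polynomial identity to conclusions about a single factor $p_{A_j}$: one has to note both that every root of $p_{A_j}$ is inherited as a root of the product (so it is an $N$th root of unity other than $1$), and that the multiplicity of $-1$ in the product, being $1$ when $N$ is even, forces the distribution of that root among the factors. Once these two points are in place, Lemma \ref{lkey} delivers the conclusion without further calculation.
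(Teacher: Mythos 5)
Your proof is correct and follows essentially the same route as the paper's: encode the sum system condition as $\prod_j p_{A_j}(x) = \sum_{k=0}^{N-1} x^k$, note that all roots of the right-hand side lie on the unit circle and are distinct from $1$ (with $-1$ simple when $N$ is even), and then apply Lemma~\ref{lkey}(a) or (b) to each factor according to whether it picks up the root $-1$. The paper organizes the case split globally (by the parity of $N = \prod d_j$) while you organize it locally (per factor $p_{A_j}$, according to whether $-1$ divides it), but the content is identical and your accounting of the simplicity of $-1$ and its unique assignment to a single factor is precisely the point the paper also relies on.
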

\begin{proof}
Denoting the elements of the set
$A_j$
by
$a^{(j)}_1, a^{(j)}_2, \dots, a^{(j)}_{d_j},$
where
$d_j = |A_j|,$
and setting
$d = \prod \limits_{k=1}^m |A_k|,$
we find
\begin{align}
 \prod_{j=1}^m p_{A_j}(x) &= \left(\sum_{k_1=1}^{d_1} x^{a^{(1)}_{k_1}}\right)
 \left(\sum_{k_2=1}^{d_2} x^{a^{(2)}_{k_2}}\right) \cdots
 \left(\sum_{k_m=1}^{d_m} x^{a^{(m)}_{k_m}}\right)
\nonumber\\
 &= \sum_{k_1=1}^{d_1} \sum_{k_2=1}^{d_2} \cdots \sum_{k_m=1}^{d_m} x^{a^{(1)}_{k_1} + a^{(2)}_{k_2} + \cdots + a^{(m)}_{k_m}}
 = \sum_{j=0}^{d-1} x^j
 = \frac{1 - x^d}{1 - x},
\label{esspoly}\end{align}
where we used the sum system property
$\sum \limits_{k=1}^m A_k = \ap d$
in the penultimate step.
This shows that the polynomials
$p_{A_j}$
form a factorisation of the polynomial on the right-hand side of (\ref{esspoly}).
Now we distinguish between two cases.
\par\medskip
{\it 1st case.\/}
If
$d = \prod \limits_{j=1}^m d_j$
is odd, i.e. if all
$d_j$
are odd, then the polynomial on the right-hand side of (\ref{esspoly}) has no
real roots; its roots are the non-real $d$th roots of unity.
Hence, for any
$j \in \{1, \dots, m\},$
$p_{A_j}$
has only non-real roots situated on the complex unit circle, and it has
real coefficients (in fact, coefficients in $\{0, 1\}$). Hence, by Lemma \ref{lkey} (a),
$p_{A_j}$
has even degree
and is palindromic, which gives the stated property for
$A_j.$
\par\medskip
{\it 2nd case.\/}
If
at least one of the component set cardinalities
$d_j$
is even, then
$d$
is even, so
$-1$
is a (simple) root of the polynomial on the right-hand side of (\ref{esspoly}).
Therefore exactly one of the polynomials
$p_{A_j}$
has the root
$-1;$
w.l.o.g. we may assume that
$p_{A_1}$
is this polynomial.
Then for any
$j \in \{2, \dots, m\},$
the same reasoning as in the first case shows that
$p_{A_j}$
has even degree
and is palindromic, while, by Lemma \ref{lkey} (b),
$p_{A_1}$
is palindromic of odd degree.
\end{proof}
This observation allows us to establish the following bijection between
sum-and-distance systems and sum systems.
\begin{theorem}\label{tSdsSs1}
Let
$m \in {\mathbb N}$
and suppose the non-empty sets
$A_1, A_2, \dots, A_m \subset {\mathbb N}$
form an $m$-part non-inclusive sum-and-distance system.
For
$j \in \{1, \dots, m\},$
let
\begin{align}
 \tilde A_j &:= \rc 2\,\max A_j + \rc 2\,(A_j \cup (-A_j)) = \left \{ \frac{(\max A_j) - a}2, \frac{(\max A_j) + a}2 : a \in A_j \right\};
\nonumber\end{align}
then
$\tilde A_1, \tilde A_2, \dots, \tilde A_m$
form an $m$-part sum system, where each part has even cardinality.
\par
Conversely, suppose the sets
$\tilde A_1, \tilde A_2, \dots, \tilde A_m \subset {\mathbb N}_0$
form an $m$-part sum system,
where each component set has even cardinality.
Then, for each
$j \in \{1, \dots, m\},$
denoting the elements of
$\tilde A_j$
by
$0 = \alpha_1 < \alpha_2 < \cdots < \alpha_{2 \nu_j},$
let
\begin{align}
 A_j &:= \{\alpha_{\nu_j + k} - \alpha_{\nu_j + 1 - k} : k \in \{1, \dots, \nu_j\} \};
\nonumber\end{align}
then the sets
$A_1, A_2, \dots, A_m$
form an $m$-part non-inclusive sum-and-distance system.
\end{theorem}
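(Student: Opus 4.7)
The plan is to exploit the reformulation that an $m$-part non-inclusive sum-and-distance system is characterised by $\sum_{j=1}^m (A_j \cup (-A_j)) = 2\ap{2^m N} - 2^m N + 1$ with $N = \prod_{j=1}^m |A_j|$, noting that the construction $\tilde A_j = \rc 2\,\max A_j + \rc 2\,(A_j \cup (-A_j))$ is just a rescaled shift of $A_j \cup (-A_j)$. Translating this identity into a statement about $\sum_{j=1}^m \tilde A_j$ will give the sum system condition, provided integrality, non-negativity and cardinality are checked.

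For the forward direction, I would first observe that each $A_j$ has constant parity: since every signed sum $\sum_{j=1}^m \pm a^{(j)}$ is odd, swapping one $a^{(j)}$ for another element of $A_j$ must change the sum by an even number, forcing the two to agree mod $2$. Hence $\max A_j + a$ is even for all $a \in A_j$, so $\tilde A_j \subset {\mathbb N}_0$ with $\min \tilde A_j = 0$ and $|\tilde A_j| = |A_j \cup (-A_j)| = 2|A_j|$ (the last equality from $A_j \subset {\mathbb N}$ making $A_j$ and $-A_j$ disjoint). The maximum of the target set is $2^m N - 1$, and is attained by $\sum_{j=1}^m \max A_j$, so $\sum_{j=1}^m \max A_j = 2^m N - 1.$ Combining these,
\begin{align*}
\sum_{j=1}^m \tilde A_j &= \rc 2 \sum_{j=1}^m \max A_j + \rc 2 \sum_{j=1}^m (A_j \cup (-A_j)) \\
 &= \frac{2^m N - 1}2 + \ap{2^m N} - \frac{2^m N - 1}2 = \ap{2^m N},
\end{align*}
and since $2^m N = \prod_{j=1}^m |\tilde A_j|$, the $\tilde A_j$ form an $m$-part sum system with even component cardinalities.

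For the converse, I would invoke Theorem \ref{tsspalin} to obtain the palindromic identity $\alpha_i + \alpha_{2\nu_j + 1 - i} = \alpha_{2\nu_j} = \max \tilde A_j$ for each $j$. Substituting this into the definition of $A_j$ rewrites each element as $\alpha_{\nu_j+k} - \alpha_{\nu_j+1-k} = 2\alpha_{\nu_j+k} - \max\tilde A_j,$ and applying the palindromic identity a second time to describe $-A_j$ via the lower-half indices gives
\begin{align*}
A_j \cup (-A_j) = 2\tilde A_j - \max \tilde A_j.
\end{align*}
The set $A_j$ consists of $\nu_j$ strictly positive integers, since the $\alpha$'s are strictly increasing and $\nu_j + k > \nu_j + 1 - k$ for $k \geq 1.$ Summing over $j$, and using $\sum_{j=1}^m \max\tilde A_j = 2^m N - 1$ (the maximum of $\sum_{j=1}^m \tilde A_j = \ap{2^m N}$, with $N = \prod_{j=1}^m \nu_j$), I recover precisely the identity defining an $m$-part non-inclusive sum-and-distance system.

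The main obstacle, and the reason the converse is not obvious, is establishing the palindromic structure of the $\tilde A_j$; this is exactly what Theorem \ref{tsspalin} supplies, and is what licenses the reading of the mirror pair $\alpha_{\nu_j+1-k}, \alpha_{\nu_j+k}$ as a sum-and-difference pair for a single element of $A_j.$ Beyond that, the arithmetic is routine, and one can additionally verify that the two constructions invert one another: the forward map sends $a \in A_j$ to $\frac{\max A_j \pm a}{2} \in \tilde A_j,$ and the converse then sends the upper-half element $\frac{\max A_j + a}{2}$ back to $2 \cdot \frac{\max A_j + a}{2} - \max A_j = a,$ confirming a genuine bijection.
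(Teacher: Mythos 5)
Your proposal is correct and follows essentially the same route as the paper: the forward direction is the same direct computation of $\sum_j \tilde A_j$ using $\sum_j \max A_j = 2^m\prod_j|A_j|-1$, and the converse rests, as in the paper, on the palindromic symmetry from Theorem \ref{tsspalin} to obtain $A_j\cup(-A_j)=2\tilde A_j-\max\tilde A_j$. Your explicit parity argument showing all elements of $A_j$ are congruent mod $2$ (so that $\tilde A_j\subset{\mathbb N}_0$) and your cardinality check are details the paper leaves implicit, and are welcome additions.
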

\begin{proof}
We find for the set sum
\begin{align}
 \sum_{j=1}^m \tilde A_j &= \rc 2 \sum_{j=1}^m \max A_j + \rc 2 \sum_{j=1}^m (A_j \cup (-A_j))
\nonumber\\
 &= \rc 2 \sum_{j=1}^m \max A_j + \Ap{2^m \prod_{j=1}^m |A_j|} - 2^{m-1}  \prod_{j=1}^m |A_j| + \rc 2
\nonumber\\
 &= \Ap{2^m \prod_{j=1}^m |A_j|} = \Ap{\prod_{j=1}^m |\tilde A_j|},
\nonumber\end{align}
bearing in mind that
\begin{align}
 \rc 2 \sum_{j=1}^m \max A_j &= 2^m \prod_{j=1}^m |A_j| - 1 - 2^{m-1} \prod_{j=1}^m |A_j| + \rc 2,
\nonumber\end{align}
as the sum of the largest elements of the component sets of a sum-and-distance
system gives the largest element of its target set.
\par
For the converse, we note that for each
$j \in \{1, \dots, m\},$
the component set
$\tilde A_j$
of the sum system has palindromic symmetry by Theorem \ref{tsspalin}, i.e. its ordered elements
satisfy
\begin{align}
 \alpha_{\nu_j+k} + \alpha_{\nu_j+1-k} &= \alpha_{2 \nu_j} \qquad (k \in \{1, \dots, \nu_j\}).
\nonumber\end{align}
Hence
\begin{align}
 \alpha_{\nu_j+k} - \alpha_{\nu_j+1-k} &= 2 \alpha_{\nu_j+k} - \alpha_{2\nu_j}
\nonumber\end{align}
and also
\begin{align}
 -(\alpha_{\nu_j+k} - \alpha_{\nu_j+1-k}) &= 2 \alpha_{\nu_j+1-k} - \alpha_{2 \nu_j},
\nonumber\end{align}
which gives
\begin{align}
 A_j \cup (-A_j) &= \{2 \alpha_{\nu_j+k} - \alpha_{2 \nu_j} : k \in \{1, \dots, \nu\}\} \cup \{2 \alpha_{\nu_j+1-k} - \alpha_{2 \nu_j} : k \in \{1, \dots, \nu\}\}
\nonumber\\
 &= 2 \tilde A_j - \max \tilde A_j.
\nonumber\end{align}
Therefore
\begin{align}
 \sum_{j=1}^m (A_j \cup (-A_j)) &= \sum_{j=1}^m (2 \tilde A_j - \max \tilde A_j)
 = 2 \sum_{j=1}^m \tilde A_j - \sum_{j=1}^m \max \tilde A_j
\nonumber\\
 &= 2 \Ap{\prod_{j=1}^m |\tilde A_j|} - \left(\prod_{j=1}^m |\tilde A_j| - 1 \right)
\nonumber\\
 &= 2 \Ap{2^m \prod_{j=1}^m |A_j|} - 2^m \prod_{j=1}^m |A_j| + 1,
\nonumber\end{align}
as required.
\end{proof}
\begin{theorem}\label{tSdsSs2}
Let
$m \in {\mathbb N}$
and suppose the non-empty sets
$A_1, A_2, \dots, A_m \subset {\mathbb N}$
form an $m$-part inclusive sum-and-distance system. For
$j \in \{1, \dots, m\},$
let
\begin{align}
 \tilde A_j := \max A_j + (A_j \cup \{0\} \cup (-A_j));
\nonumber\end{align}
then
$\tilde A_1, \tilde A_2, \dots, \tilde A_m$
form an $m$-part sum system, where each part has odd cardinality.
\par
Conversely, suppose the sets
$\tilde A_1, \tilde A_2, \dots, \tilde A_m \subset {\mathbb N}_0$
form an $m$-part sum system, where each component set has odd cardinality.
Then, for each
$j \in \{1, \dots, m\},$
denoting the elements of
$\tilde A_j$
by
$0 = \alpha_1 < \alpha_2 < \cdots < \alpha_{2 \nu_j+1},$
let
\begin{align}
 A_j &:= \{\textstyle{\rc 2}\,(\alpha_{\nu_j+1+k} - \alpha_{\nu_j+1-k}) : k \in \{1, \dots, \nu_j\}\};
\nonumber\end{align}
then the sets
$A_1, A_2, \dots, A_m$
form an $m$-part inclusive sum-and-distance system.
\end{theorem}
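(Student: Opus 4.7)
The plan is to mirror the proof of Theorem~\ref{tSdsSs1}, replacing the role played there by $\max\tilde A_j$ (in the even cardinality setting) with $\tfrac12\max\tilde A_j$ (in the odd cardinality setting), which reflects the fact that each $\tilde A_j$ now has a genuine middle element.

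For the forward direction I would first note that, since $A_j\subset{\mathbb N}$, the sets $A_j$, $\{0\}$ and $-A_j$ are pairwise disjoint, so $|\tilde A_j|=2|A_j|+1$ is odd. Next I would compute
\begin{align*}
 \sum_{j=1}^m \tilde A_j \;=\; \sum_{j=1}^m \max A_j \;+\; \sum_{j=1}^m \bigl(A_j\cup\{0\}\cup(-A_j)\bigr),
\end{align*}
apply Lemma~\ref{lfulltarget}(b) to identify the second sum with the symmetric arithmetic progression $\bigl\langle\prod_j(2|A_j|+1)\bigr\rangle - \tfrac12\bigl(\prod_j(2|A_j|+1)-1\bigr)$, and observe that $\sum_{j=1}^m\max A_j$, being the largest sum reachable from the component sets, coincides with the maximum of that progression, namely $\tfrac12\bigl(\prod_j(2|A_j|+1)-1\bigr)$. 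The two contributions combine into $\bigl\langle\prod_j|\tilde A_j|\bigr\rangle$, which is precisely the sum system target.

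For the converse I start from a sum system all of whose components have odd cardinality. Theorem~\ref{tsspalin} then applies in its first case: every $\max\tilde A_j$ is even, and the ordered elements obey the palindromic identity $\alpha_\ell+\alpha_{2\nu_j+2-\ell}=\alpha_{2\nu_j+1}$, so in particular the middle element is $\alpha_{\nu_j+1}=\tfrac12\alpha_{2\nu_j+1}$. This lets me rewrite
\begin{align*}
 \tfrac12\bigl(\alpha_{\nu_j+1+k}-\alpha_{\nu_j+1-k}\bigr) \;=\; \alpha_{\nu_j+1+k}-\alpha_{\nu_j+1},
\end{align*}
which is a positive integer (positivity from strict monotonicity of the $\alpha$'s, integrality from the evenness of $\alpha_{2\nu_j+1}$); hence $A_j\subset{\mathbb N}$ with $|A_j|=\nu_j$ is well defined. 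The structural heart of the argument is the identity
\begin{align*}
 A_j\cup\{0\}\cup(-A_j) \;=\; \tilde A_j-\tfrac12\max\tilde A_j,
\end{align*}
obtained by splitting the indices of $\tilde A_j$ into those above, equal to, and below $\nu_j+1$ and invoking the palindromic identity. Summing this over $j$ and using $\sum_j\tilde A_j=\bigl\langle\prod_j|\tilde A_j|\bigr\rangle$ together with $\sum_j\max\tilde A_j=\prod_j|\tilde A_j|-1$ reproduces exactly the inclusive sum-and-distance target of Lemma~\ref{lfulltarget}(b).

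The main obstacle is not computational but conceptual: the integrality of the $A_j$ in the converse hinges entirely on the evenness of $\max\tilde A_j$ supplied by Theorem~\ref{tsspalin}. Without that ingredient the proposed inverse map would produce half-integers and fail to land in ${\mathbb N}$; with it, the remainder of the proof is a direct shift-and-count calculation paralleling Theorem~\ref{tSdsSs1}.
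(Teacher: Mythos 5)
Your proposal follows essentially the same route as the paper's proof: the forward direction is the shift-by-$\sum\max A_j$ computation, and the converse uses the palindromic symmetry of Theorem~\ref{tsspalin} to establish evenness of $\max\tilde A_j$ and the key identity $A_j\cup\{0\}\cup(-A_j)=\tilde A_j-\tfrac12\max\tilde A_j$. One small slip: you cite Lemma~\ref{lfulltarget}(b), which is stated only for the two-component case, where you should instead invoke the definition of an $m$-part inclusive sum-and-distance system directly — but since that definition is exactly the $m$-part analogue of Lemma~\ref{lfulltarget}(b), the substance of your argument is unaffected.
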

\begin{proof}
In analogy to the proof of Theorem \ref{tSdsSs1}, we find the set sum
\begin{align}
 \sum_{j=1}^m \tilde A_j &= \sum_{j=1}^m \max A_j + \sum_{j=1}^m (A_j \cup \{0\} \cup (-A_j))
\nonumber\\
 &= \sum_{j=1}^m \max A_j + \Ap{\prod_{j=1}^m (2|A_j|+1)} - \rc 2 \prod_{j=1}^m (2|A_j|+1) + \rc 2
\nonumber\\
 &= \Ap{\prod_{j=1}^m (2|A_j|+1)} = \Ap{\prod_{j=1}^m |\tilde A_j|}.
\nonumber\end{align}
\par
For the converse, we use the fact that for each
$j \in \{1, \dots, m\},$
the component set
$\tilde A_j$
of the sum system has palindromic symmetry by Theorem \ref{tsspalin}, which gives
\begin{align}
 \alpha_{\nu_j+1+k} + \alpha_{\nu_j+1-k} &= \alpha_{2\nu_j+1} \qquad (k \in \{0, \dots, \nu\})
\nonumber\end{align}
(bearing in mind that
$\alpha_1 = 0$),
and in particular
$2 \alpha_{\nu_j+1} = \alpha_{2\nu_j+1}.$
Thus
$\alpha_{2\nu_j+1} = \max \tilde A_j$
is even, which also follows from Theorem \ref{tsspalin}, as all component sets of the sum system have odd cardinality.
Hence
\begin{align}
 {\textstyle \rc 2}\,(\alpha_{\nu_j+1+k} - \alpha_{\nu_j+1-k}) &= \alpha_{\nu_j+1+k} - {\textstyle \rc 2}\,\alpha_{2\nu_j+1} = \alpha_{\nu_j+1+k} - \alpha_{\nu_j+1} \in {\mathbb N},
\nonumber\end{align}
and
\begin{align}
 -{\textstyle \rc 2}\,(\alpha_{\nu_j+1+k} - \alpha_{\nu_j+1-k}) &= \alpha_{\nu_j+1-k} - {\textstyle \rc 2}\,\alpha_{2\nu_j+1} = \alpha_{\nu_j+1-k} - \alpha_{\nu_j+1} \in {\mathbb N}
\nonumber\end{align}
$(k \in \{1, \dots, \nu\}).$
Consequently,
\begin{align}
 A_j &\cup \{0\} \cup (-A_j)
\nonumber\\
 &= \{\alpha_{\nu+1+k} - \alpha_{\nu_j+1} : k \in \{1, \dots, \nu\}\} \cup \{0\} \cup \{\alpha_{\nu+1-k} - \alpha_{\nu_j+1} : k \in \{1, \dots, \nu\}\}
\nonumber\\
 &= \{\alpha_k - \alpha_{\nu_j+1} : k \in \{1, \dots, 2 \nu_j + 1\}\}
 = \tilde A_j - {\textstyle \rc 2} \max \tilde A_j.
\nonumber\end{align}
This gives
\begin{align}
 \sum_{j=1}^m (A_j \cup \{0\} \cup (-A_j)) &= \sum_{j=1}^m \tilde A_j - \rc 2 \sum_{j=1}^m \max \tilde A_j
 = \Ap{\prod_{j=1}^m |\tilde A_j|} - \rc 2 \left(\prod_{j=1}^m |\tilde A_j| - 1 \right)
\nonumber\\
 &= \Ap{\prod_{j=1}^m (2|A_j|+1)} - \rc 2 \left(\prod_{j=1}^m (2|A_j|+1) - 1 \right),
\nonumber\end{align}
proving the claim.
\end{proof}
{\it Remark.\/}
Note that sum systems with odd cardinality throughout correspond to inclusive
sum-and-distance systems, and the tight target set (containing consecutive
integers) of the latter is related to the fact that the maximum of each
component set of the sum system is even, as apparent from the proof of
Theorem \ref{tSdsSs2}.
However, sum systems with even cardinality do not have this property, and
hence their corresponding non-inclusive sum-and-distance systems have a
more sparse target set containing consecutive odd integers only.
Thus the discrepancy between inclusive and non-inclusive sum-and-distance
systems resolves into the simple dichotomy between odd and even cardinality
of the component sets when considering the sum systems.
\par
We remark further that at the level of sum systems, there is no reason to
require that the components have all odd or all even cardinality.
A sum system with mixed parity will, by the transforms given in Theorems
\ref{tSdsSs1} and \ref{tSdsSs2}, correspond to a hybrid inclusive/non-inclusive
sum-and-distance system, but we do not pursue this correspondence further
in the present study.
\section{Principal reversible cuboids and sum systems}
\label{sprc}
In this section we shall extend the definition of reversible square matrices,
which can be considered as order 2 tensors, to general order $m$ tensors.
We use multiindex notation, i.e. tensor components are indexed by
coordinate vectors
$k \in {\mathbb N}^m,$
which have a partial ordering
given by
\begin{align}
 k \le n \iff k_j \le n_j \ (j \in \{1, \dots, m\}) \qquad (k, n \in {\mathbb N}^m).
\nonumber\end{align}
The root element of the tensor (corresponding to the top left entry of a
matrix) has index
$1_m = (1, 1, \dots, 1) \in {\mathbb N}^m.$
We shall also use the standard unit vectors
$e_j \in {\mathbb N}^m$
$(j \in \{1, \dots,m\}),$
where
$(e_j)_l = \delta_{j l}$
$(j, l \in \{1, \dots, m\}),$
i.e.
$e_j$
has $j$th entry 1 and all other entries 0.
\par\medskip
{\it Definition.}
Let
$m \in {\mathbb N}$
and
$n \in {\mathbb N}^m.$
Then
$M \in {\mathbb N}_0^n$
is called an
{\it order\/}
$m$
{\it tensor\/}
(of dimensions $n_1, n_2, \dots, n_m$).
It has entries
$M_k = M_{k_1, k_2, \dots, k_m} \in {\mathbb N}_0$
$(k \in {\mathbb N}^m, k \le n).$
\par
For
$j < m,$
we call any subtensor where
$m-j$
indices are fixed while the remaining
$j$
indices vary in the range determined by
$n$
an
{\it order $j$ slice\/}
of
$M.$
\par\medskip
{\it Remark.\/}
Strictly speaking, the order of the tensor is
$|\{j \in \{1, \dots, m\} : n_j > 1\}| \le m,$
so it has order
{\it at most\/}
$m.$
The order will be exactly
$m$
if
$n \in ({\mathbb N}+1)^m.$
However, we allow
$n \in {\mathbb N}^m$
for ease of reference later.
\par\medskip\noindent
The following is an extension of the vertex-cross sum property (V) of
matrices which states that the two pairs of diagonally opposite corners of any
rectangular submatrix add up to the same number \cite{rSupAlg}.
\par\medskip
{\it Definition.}
Let
$M \in {\mathbb N}_0^n,$
$n \in {\mathbb N}^m,$
$m \in {\mathbb N} + 1.$
Then we say that
$M$
has the
{\it vertex cross sum property\/}
(V) if and only if every order 2 slice of M has the property (V) for
matrices, i.e. if
\begin{align}
 M_{k_1, \dots, k_i, \dots, k_j, \dots, k_m} + M_{k_1, \dots, k_i', \dots, k_j', \dots, k_m} &= M_{k_1, \dots, k_i, \dots, k_j', \dots, k_m} + M_{k_1, \dots, k_i', \dots, k_j, \dots, k_m}
\label{eVdef}\end{align}
for all
$1\le i < j \le m$
and
$k_1, \dots, k_m, k_i', k_j' \in {\mathbb N}$
such that
$k_l, k_l' \le n_l$
$(l \in \{1, \dots, m\}).$
\begin{lemma}\label{lVprop}
Let
$M \in {\mathbb N}_0^n,$
$n \in {\mathbb N}^m,$
$m \in {\mathbb N}+1.$
Then
$M$
has property
{\rm (V)}
if and only if
\begin{align}
 M_k &= \sum_{j=1}^m M_{1_m + (k_j-1)e_j} - (m-1) M_{1_m} \qquad (k \in {\mathbb N}^m, k \le n).
\label{eVprop}\end{align}
\end{lemma}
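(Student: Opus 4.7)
The statement is an equivalence, so I plan to prove both directions. Formula (\ref{eVprop}) says that the whole tensor is determined by $M_{1_m}$ together with its values on the $m$ coordinate axes emanating from $1_m$, and property (V) supplies exactly the relations needed to propagate these axis values to every entry.

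For the forward direction, I would induct on $s(k) := |\{j \in \{1, \dots, m\} : k_j > 1\}|$, the number of coordinates of $k$ exceeding $1$. If $s(k) = 0$, then $k = 1_m$ and both sides of (\ref{eVprop}) equal $M_{1_m}$; if $s(k) = 1$, with say $k_\ell > 1$ and all other $k_j = 1$, then every term $M_{1_m + (k_j - 1) e_j}$ with $j \ne \ell$ reduces to $M_{1_m}$, so the right-hand side simplifies to $M_k + (m-1) M_{1_m} - (m-1) M_{1_m} = M_k$. For $s(k) \ge 2$, pick indices $i < j$ with $k_i, k_j > 1$ and apply (V) with $k_i' = k_j' = 1$; after rearrangement this gives
\begin{align}
 M_k = M_{k - (k_i - 1) e_i} + M_{k - (k_j - 1) e_j} - M_{k - (k_i - 1) e_i - (k_j - 1) e_j}.
\nonumber\end{align}
Each of the three tensor entries on the right is indexed by a tuple with strictly smaller $s$-value than $k$, so the induction hypothesis expresses each of them as a sum over the same $m$ axis values, with different $M_{1_m}$-corrections. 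Adding them up and tracking coefficients produces the formula for $M_k$.

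For the converse, assume (\ref{eVprop}) holds for every $k \le n$. Substituting this expression into each of the four tensor entries appearing in (\ref{eVdef}), both sides of (V) reduce to the same linear combination, explicitly $2 \sum_{l \ne i, j} M_{1_m + (k_l - 1) e_l} + M_{1_m + (k_i - 1) e_i} + M_{1_m + (k_i' - 1) e_i} + M_{1_m + (k_j - 1) e_j} + M_{1_m + (k_j' - 1) e_j} - 2 (m-1) M_{1_m}$, so (V) follows by cancellation.

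The only mildly delicate point is the coefficient bookkeeping in the induction step. When an index $k_\star$ is reset to $1$ in a term $M_{k^{(\star)}}$, the corresponding term $M_{1_m + (k_\star - 1) e_\star}$ in the IH sum becomes $M_{1_m}$, changing the $(m-1) M_{1_m}$ correction by $+M_{1_m}$. Summing the three contributions gives a net $M_{1_m}$ coefficient of $-(m-2) - (m-2) + (m-3) = -(m-1)$, while the $M_{1_m + (k_i - 1) e_i}$ and $M_{1_m + (k_j - 1) e_j}$ pieces cancel in $+1+0-1$ and $0+1-1$ fashion, leaving the full axis sum $\sum_{l=1}^m M_{1_m + (k_l - 1) e_l}$ intact. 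This is the only place where more than routine substitution is needed.
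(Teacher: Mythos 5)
Your proof is correct and takes essentially the same approach as the paper: both proceed by induction (yours on the number of coordinates of $k$ exceeding $1$, the paper's on the cardinality $l$ of a subset of coordinates via an auxiliary identity), with the induction step being a single application of property (V) resetting two coordinates to $1$, and the converse by direct substitution of (\ref{eVprop}) into (\ref{eVdef}). Your coefficient bookkeeping $-(m-2)-(m-2)+(m-3) = -(m-1)$ is right, so the argument is complete.
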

\begin{proof}
Suppose
$M$
has property (V). We shall show by induction on
$l \in \{2, \dots, m\}$
that for any
$k \in {\mathbb N}^m,$
$k \le n,$
and any cardinality
$l$
subset
$\{j_1, j_2, \dots, j_l\} \subset \{1, 2, \dots, m\},$
\begin{align}
 M_{1_m + \sum_{r=1}^l (k_{j_r}-1)e_{j_r}} &= \sum_{r=1}^l M_{1_m + (k_{j_r}-1)e_{j_r}} - (l-1) M_{1_m}.
\label{eVind}\end{align}
For
$l = 2,$
property (V) gives
\begin{align}
 M_{1_m + (k_{j_1}-1)e_{j_1} + (k_{j_2}-1)e_{j_2}} &=  M_{1_m + (k_{j_1}-1)e_{j_1}} + M_{1_m + (k_{j_2}-1)e_{j_2}} -  M_{1_m}.
\nonumber\end{align}
Now suppose
$l \in \{2, \dots, m-1\}$
is such that identity (\ref{eVind}) holds for up to
$l$
terms. Then, again by property (V), we find
\begin{align}
 M_{1_m + \sum_{r=1}^{l+1} (k_{j_r}-1)e_{j_r}} &= M_{1_m + \sum_{r=1}^{l-1} (k_{j_r}-1)e_{j_r} + (k_{j_l}-1)e_{j_l}} + M_{1_m + \sum_{r=1}^{l-1} (k_{j_r}-1)e_{j_r} + (k_{j_{l+1}}-1)e_{j_{l+1}}}
\nonumber\\
 &\qquad  - M_{1_m + \sum_{r=1}^{l-1} (k_{j_r}-1)e_{j_r}}
\nonumber\\
 &= \sum_{r=1}^l M_{1_m + (k_{j_r}-1)e_{j_r}} + \sum_{r \in \{1, \dots,l-1\} \cup \{l+1\}} M_{1_m + (k_{j_r}-1)e_{j_r}}
\nonumber\\
 &\qquad  - 2\,(l-1) M_{1_m}
 - \sum_{r=1}^{l-1} M_{1_m + (k_{j_r}-1)e_{j_r}} + (l-2) M_{1_m}
\nonumber\\
 &= \sum_{r=1}^{l+1} M_{1_m + (k_{j_r}-1)e_{j_r}} - l\,M_{1_m}.
\nonumber\end{align}
The identity (\ref{eVprop}) now follows when we take
$l = m$
in (\ref{eVind}), which forces
$\{j_1, j_2, \dots, j_m\} = \{1, 2, \dots, m\}.$
The converse follows directly from applying identity (\ref{eVprop}) to (\ref{eVdef}).
\end{proof}
The preceding lemma shows that if the root entry
$M_{1_m} = 0,$
then each entry of the order
$m$
tensor
$M$
is the sum of the entries on the axes for each of its index coordinates, i.e.
\begin{align}
 M_k &= M_{k_1,1,\dots,1} + M_{1, k_2, 1, \dots, 1} + \cdots + M_{1, \dots, 1, k_m}.
\nonumber\end{align}
This means that overall the set of entries of
$M$
is equal to the sum set of the sets of entries on each coordinate axis,
where all but one entry of the index vector are kept equal to 1.
This gives the following connection with sum systems.
\begin{theorem}\label{tssten}
Let
$m \in {\mathbb N},$
$n \in ({\mathbb N}+1)^m$
and
$M \in {\mathbb N}_0^n$
an order
$m$
tensor with property
{\rm (V),}
$M_{1_m} = 0$
and set of entries
\begin{align}
 \{M_k : k \in {\mathbb N}^m, k \le n\} &= \Ap{\prod_{j=1}^m n_j}.
\nonumber\end{align}
Then the sets
$A_1, A_2, \dots, A_m \subset {\mathbb N}_0,$
\begin{align}
 A_j &= \{M_{1_m + k e_j} : k \in \ap{n_j}\} \qquad (j \in \{1, \dots, m\})
\label{eMcoord}\end{align}
form an $m$-part sum system.
\end{theorem}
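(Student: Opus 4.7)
The plan is to use Lemma \ref{lVprop} to rewrite every entry of $M$ as a sum of entries lying on the coordinate axes through the root, and then read off the sum system structure directly from the hypothesis on the entry set.

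First, since $M$ has property (V) and $M_{1_m}=0$, Lemma \ref{lVprop} collapses to
\begin{align}
 M_k &= \sum_{j=1}^m M_{1_m + (k_j-1)e_j} \qquad (k \in {\mathbb N}^m,\ k \le n).
\nonumber\end{align}
Substituting $k_j - 1 \in \ap{n_j}$, the axis entry $M_{1_m+(k_j-1)e_j}$ is precisely a generic element of $A_j$ as defined in (\ref{eMcoord}). Hence, as $k$ ranges over all indices with $k \le n$, the value $M_k$ ranges over the set sum $A_1 + A_2 + \cdots + A_m$, giving
\begin{align}
 \{M_k : k \le n\} &= \sum_{j=1}^m A_j.
\nonumber\end{align}

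Next, I would check the cardinality conditions needed for an $m$-part sum system. By hypothesis the entries of $M$ are exactly the $\prod_{j=1}^m n_j$ distinct numbers in $\ap{\prod_j n_j}$, so in particular all entries are pairwise distinct. Restricting to the $j$th axis, this forces the $n_j$ values $M_{1_m+ke_j}$ ($k \in \ap{n_j}$) to be distinct, so $|A_j| = n_j$. Since $n \in ({\mathbb N}+1)^m$, each $|A_j| \ge 2$, and $A_j \subset {\mathbb N}_0$ by definition.

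Combining these two observations,
\begin{align}
 \sum_{j=1}^m A_j &= \Ap{\prod_{j=1}^m n_j} = \Ap{\prod_{j=1}^m |A_j|},
\nonumber\end{align}
which is exactly the defining property of an $m$-part sum system. There is no genuine obstacle here; the only point that requires a brief remark is that the distinctness of the axis entries (and hence $|A_j|=n_j$) is not assumed outright but is inherited from the stronger hypothesis that the full entry set of $M$ is the prescribed arithmetic progression of the correct cardinality.
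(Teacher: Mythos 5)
Your proof follows essentially the same route as the paper: invoke Lemma \ref{lVprop} with $M_{1_m}=0$ to express each $M_k$ as a sum of axis entries, and then identify the entry set of $M$ with the set sum $\sum_j A_j$. The extra remark you include about the distinctness of the axis entries forcing $|A_j|=n_j$ is a harmless elaboration that the paper leaves implicit.
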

\begin{proof}
The statement follows from Lemma \ref{lVprop} when we note that the identity
(\ref{eVprop}) will turn into
\begin{align}
 M_k &= \sum_{j=1}^m M_{1_m + (k_j-1)e_j} \qquad (k \in {\mathbb N}^m, k \le n),
\label{eMsum}\end{align}
and that the set of entries of
$M$
is equal to the target set for the sum system.
\end{proof}
Conversely, given an $m$-part sum system and choosing the entries on the
coordinate axes of
$M$
such that they satisfy (\ref{eMcoord}) and
$M_{1_m} = 0,$
it is clear that defining the remaining entries via (\ref{eMsum}) will result
in an order
$m$
tensor with property (V).
\par
In fact,
$M$
can be considered as an $m$-dimensional tabular representation of the
sum system with a certain arrangement of the elements of each component set.
\par
There is some freedom of choice in assigning the elements of the component
sets of a sum system to tensor entries so as to satisfy (\ref{eMcoord}), with
only the constraint that
$M_{1_m} = 0.$
In order to establish a bijection, we introduce the following generalisation
of Ollerenshaw and Br\'ee's definition of a principal reversible square \cite{rOB}.
\par\medskip
{\it Definition.}
We call an order $m$ tensor
$M \in {\mathbb N}_0^n,$
$n \in ({\mathbb N}+1)^m,$
$m \in N,$
a
{\it principal reversible $m$-cuboid\/}
if
$M$
has property (V), its set of entries is
\begin{align}
 \{M_k : k \in {\mathbb N}^m, k \le n\} &= \Ap{\prod_{j=1}^m n_j},
\nonumber\end{align}
and for every
$j \in \{1, \dots, m\},$
every row in the $j$th direction is arranged in strictly increasing order,
i.e.
$M_{k} < M_{k + l e_j} \ (k \in {\mathbb N}^m, 1 \le l \le n_j-k_j).$
\par\medskip\noindent
Putting the elements of the sum system component
$A_j$
onto the
$j$th coordinate axis of
$M,$
we obtain the following relationship by virtue of Theorem \ref{tssten}.
\begin{corollary}\label{cssprc}
Let
$m \in {\mathbb N}.$
There is a bijection between the principal reversible $m$-cuboids with
dimension vector
$n \in ({\mathbb N}+1)^m$
and the $m$-part sum systems
$A_1, \dots, A_m$
with cardinalities
$|A_j| = n_j$
$(j \in \{1, \dots, m\}).$
\end{corollary}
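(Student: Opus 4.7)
The plan is to construct explicit inverse maps in both directions and verify they are well-defined and mutually inverse, leveraging Theorem \ref{tssten} and Lemma \ref{lVprop}.

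For the forward direction, given a principal reversible $m$-cuboid $M$, I would first observe that the strictly increasing row condition forces $M_{1_m} = 0$ (the minimum of the target set $\ap{\prod n_j}$ sits at the root) and ensures that the sets $A_j$ defined by (\ref{eMcoord}) have $|A_j| = n_j$ rather than merely $|A_j| \le n_j$. Applying Theorem \ref{tssten} then yields that $A_1, \dots, A_m$ form an $m$-part sum system of the prescribed cardinalities.

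For the reverse direction, given a sum system $A_1, \dots, A_m$ with $|A_j| = n_j$, I would order the elements of each component as $0 = a^{(j)}_1 < a^{(j)}_2 < \cdots < a^{(j)}_{n_j}$ (using that $0 \in A_j$, noted right after the definition of sum system) and define
\begin{align}
 M_k &:= \sum_{j=1}^m a^{(j)}_{k_j} \qquad (k \in {\mathbb N}^m,\ k \le n).
\nonumber\end{align}
Then $M_{1_m} = 0$ is immediate, property (V) is an instant consequence of the additive form of $M_k$ (both sides of (\ref{eVdef}) reduce to the same sum), the set of entries equals $\sum_{j=1}^m A_j = \ap{\prod n_j}$ by the sum system property, and strict monotonicity along each direction follows from the strict ordering of the $a^{(j)}$. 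Thus $M$ is a principal reversible $m$-cuboid.

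Finally, I would check that these two maps are mutually inverse. Starting from a cuboid $M$, extracting the axis sets and rebuilding via the formula above reproduces $M$: by Lemma \ref{lVprop} combined with $M_{1_m} = 0$ we have $M_k = \sum_{j=1}^m M_{1_m + (k_j-1)e_j}$, and because rows are strictly increasing, $M_{1_m + (k_j-1)e_j}$ is precisely the $k_j$th element of $A_j$ in increasing order. Conversely, starting from a sum system, forming $M$ and then reading off the axes recovers the original $A_j$ tautologically. The only subtle point, and the step I would take most care over, is the cardinality assertion $|A_j| = n_j$ in the forward direction, which is where the strict monotonicity built into the definition of a principal reversible cuboid is used in an essential way — without it the forward map would only produce a sum system whose component cardinalities divide the $n_j$, and the bijection would fail.
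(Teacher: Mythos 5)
Your proposal is correct and follows essentially the same route the paper takes: the forward map is Theorem \ref{tssten} applied to the axis entries, the inverse map defines $M_k$ additively from the ordered component sets via (\ref{eMsum}), and the strict monotonicity requirement in the definition of a principal reversible cuboid is exactly what removes the ``freedom of choice'' the paper mentions and makes the correspondence bijective. Your explicit attention to the cardinality point $|A_j| = n_j$ and to $M_{1_m}=0$ is a sound (and slightly more careful) write-up of the argument the paper leaves implicit.
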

\par\medskip\noindent
In conjunction with Theorem \ref{tsspalin}, this shows that principal reversible
$m$-tensors also have a generalised form of the row and column reversal symmetry (R)
defined for matrices \cite{rSupAlg}, as follows.
\begin{theorem}\label{trevsym}
Let
$m \in {\mathbb N},$
$n \in ({\mathbb N}+1)^m,$
and let
$M \in {\mathbb N}_0^n$
be a principal reversible $m$-cuboid. Then
$M$
has the line reversal symmetry
{\rm (R),}
i.e. for all
$j \in \{1, \dots, m\}$
and any
$k \in {\mathbb N}^m, k \le n,$
\begin{align}
 &M_{k_1, \dots, k_{j-1}, l, k_{j+1}, \dots, k_m} + M_{k_1, \dots, k_{j-1}, n_j+1-l, k_{j+1}, \dots, k_m}
\nonumber\\
 &= M_{k_1, \dots, k_{j-1}, 1, k_{j+1}, \dots, k_m} + M_{k_1, \dots, k_{j-1}, n_j, k_{j+1}, \dots, k_m}
 \qquad (l \in \{1, \dots, n_j\}).
\label{eRcub}\end{align}
\end{theorem}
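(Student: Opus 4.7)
\emph{Approach.} The plan is to reduce identity (\ref{eRcub}) to the palindromic property of a single axis of $M$ and then invoke Theorem \ref{tsspalin}. Using Lemma \ref{lVprop}, each of the four entries appearing in (\ref{eRcub}) can be written as a sum of $m$ axis entries, minus $(m-1)\,M_{1_m}$. The contributions from the coordinates $i \ne j$ are identical in all four terms, since they depend only on the fixed $k_i$ and not on the $j$th coordinate, and hence they cancel. Consequently (\ref{eRcub}) is equivalent to the one-dimensional statement
\begin{align}
 M_{1_m + (l-1)e_j} + M_{1_m + (n_j-l)e_j} &= M_{1_m} + M_{1_m + (n_j-1)e_j}
\nonumber\end{align}
for every $l \in \{1, \dots, n_j\}$, asserting that the entries of the $j$th coordinate axis equidistant from its endpoints sum to a common value.

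To prove this palindromic identity, first observe that $M_{1_m} = 0$: the set of entries contains $0$, and by the strict increase along every coordinate direction the minimum entry of $M$ must occupy the corner index $1_m$. Hence Theorem \ref{tssten} applies, and the axis set $A_j = \{M_{1_m + k e_j} : k \in \ap{n_j}\}$ is the $j$th component of an $m$-part sum system. The strict increase along the $j$th axis also tells us that $M_{1_m + (l-1) e_j}$ is precisely the $l$th smallest element of $A_j$; write $0 = \alpha_1 < \alpha_2 < \cdots < \alpha_{n_j}$ for this ordered enumeration. Now Theorem \ref{tsspalin} gives the set equality $A_j = \max A_j - A_j$, which, combined with the ordering, forces $\alpha_l + \alpha_{n_j+1-l} = \alpha_{n_j}$ for all $l$. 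Substituting back, and using $M_{1_m} = \alpha_1 = 0$ and $\alpha_{n_j} = M_{1_m + (n_j-1)e_j}$, yields the one-dimensional identity above.

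\emph{Main obstacle.} Once the correct reduction has been made, the argument rests entirely on Theorem \ref{tsspalin}, so there is no deep obstacle. The one point that requires care is the combined use of the strict increase property and $M_{1_m} = 0$ to identify the $j$th axis sequence of $M$ with the strictly increasing enumeration of the component set $A_j$: Theorem \ref{tsspalin} yields only a set-theoretic palindromic symmetry, whereas (\ref{eRcub}) concerns a specific arrangement of these elements as tensor entries, and it is precisely the monotonicity built into the definition of a principal reversible cuboid that bridges this gap.
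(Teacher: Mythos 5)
Your proof is correct and follows essentially the same route as the paper: reduce (\ref{eRcub}) to a one-dimensional palindromic identity along the $j$th coordinate axis via Lemma \ref{lVprop}, identify the axis entries with the strictly increasing enumeration of the sum system component $A_j$ (Theorem \ref{tssten}), and close with the palindromic set symmetry of Theorem \ref{tsspalin}. The only presentational difference is direction: the paper first establishes the palindromic identity on the axis and then lifts it to general $k$ via the identity $M_k = M_{k_1,\dots,k_{j-1},1,k_{j+1},\dots,k_m} + M_{1_m+(k_j-1)e_j}$, whereas you first cancel the $i\ne j$ contributions to reduce the general statement to the axis case — the same algebraic fact read in the opposite order. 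Your explicit remark that strict monotonicity is what upgrades the set-theoretic symmetry of Theorem \ref{tsspalin} to the ordered, entry-wise identity is a point the paper treats more implicitly, and is a worthwhile clarification.
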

\begin{proof}
Let
$A_1, A_2, \dots, A_m$
be the sum system corresponding to
$M$
by Theorem \ref{tssten}.
Then, for each
$j \in \{1, \dots, m\},$
\begin{align}
 0 &= M_{1_m} < M_{1_m + e_j} < M_{1_m + 2 e_j} < \cdots < M_{1_m + (n_j - 1) e_j} = \max A_j
\nonumber\end{align}
are the elements of the component set
$A_j,$
which by Theorem \ref{tsspalin} has the palindromic property
\begin{align}
 M_{1_m + k e_j} + M_{1_m + (n_j -1-k) e_j} &= M_{1_m + (n_j-1) e_j} + M_{1_m} \qquad (k \in \ap{n_j}).
\nonumber\end{align}
This proves the identity (\ref{eRcub}) along the coordinate axes of
$M;$
the general case follows by observing that Lemma \ref{lVprop} gives the representation
\begin{align}
 M_k &= M_{k_1, \dots, k_{j-1}, 1, k_{j+1}, \dots, k_m} + M_{1_m + (k_j-1) e_j}
 \qquad (j \in \{1, \dots, m\}, k \in {\mathbb N}^m, k \le n)
\nonumber\end{align}
for the entries of
$M.$
\end{proof}
\section{Structure and construction of principal reversible cuboids}
\label{sconstr}
Throughout this section, let
$m \in {\mathbb N},$
$n \in {\mathbb N}^m \setminus \{1_m\},$
and consider a principal reversible cuboid
$M \in {\mathbb N}_0^n.$
For a multiindex
$\tilde n \in {\mathbb N}^m,$
$\tilde n \le n,$
we write
\begin{align}
 M_{[\tilde n]} := (M_k)_{k \le \tilde n}
\nonumber\end{align}
for the subcuboid of
$M$
which has dimensions
$\tilde n$
and includes the root entry
$M_{1_m}.$
Moreover, we define
\begin{align}
 \mu_{\tilde n} := \min \{N \in {\mathbb N} : N \neq M_k \ (k \le \tilde n)\},
\nonumber\end{align}
the smallest positive integer not appearing as an entry in
$M_{[\tilde n]}.$
Then we have the following characterisation of principal reversible subcuboids
of
$M.$
\begin{lemma}\label{lPRsC}
For
$\tilde n \le n,$
\begin{align}
 \mu_{\tilde n} &\le \prod_{j=1}^m \tilde n_j,
\label{ePRsC}\end{align}
with equality
if and only if
$M_{[\tilde n]}$
is a principal reversible cuboid.
\end{lemma}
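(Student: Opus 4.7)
The plan is to establish the inequality first and then the characterisation of equality. The approach exploits two simple facts about a principal reversible $m$-cuboid $M$: all its entries are distinct (since there are $\prod_{j=1}^m n_j$ of them and they form the set $\Ap{\prod_{j=1}^m n_j}$), and $M_{1_m}=0$ (because iterating the strictly-increasing-row condition along the coordinate axes gives $M_{1_m}\le M_k$ for every $k\le n$, so the root attains the global minimum of the entry set). In particular, $M_{[\tilde n]}$ contains exactly $\prod_{j=1}^m \tilde n_j$ distinct entries, one of which is $0$.

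For the bound (\ref{ePRsC}), I would argue by a pigeonhole count: by the very definition of $\mu_{\tilde n}$, the integers $1,2,\dots,\mu_{\tilde n}-1$ all appear as entries of $M_{[\tilde n]}$; together with $0=M_{1_m}$ this produces $\mu_{\tilde n}$ distinct entries inside the subcuboid, forcing $\mu_{\tilde n}\le \prod_{j=1}^m \tilde n_j$.

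For the equivalence, the chain is short. Equality in (\ref{ePRsC}) is equivalent to $\{0,1,\dots,\mu_{\tilde n}-1\}$ exhausting the entries of $M_{[\tilde n]}$, i.e.\ the set of entries of $M_{[\tilde n]}$ being precisely $\Ap{\prod_{j=1}^m \tilde n_j}$. The other two defining properties of a principal reversible cuboid are automatically inherited by $M_{[\tilde n]}$: property (V) passes down because every order-$2$ slice of $M_{[\tilde n]}$ is also an order-$2$ slice of $M$, and the strictly-increasing-row condition is preserved under restricting each row to an initial segment. Hence the entry-set condition is both necessary and sufficient for $M_{[\tilde n]}$ to be a principal reversible $m$-cuboid, completing the proof.

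The argument is essentially elementary counting; the only mild subtlety is the preliminary identification $M_{1_m}=0$, which is not stated in the definition of a principal reversible cuboid but must be extracted from the combination of strictly monotone rows and the prescribed entry set before the pigeonhole step can be applied.
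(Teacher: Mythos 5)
Your proof is correct and follows essentially the same route as the paper's: both note that $M_{[\tilde n]}$ inherits property (V) and the strictly increasing arrangement from $M$, so the only issue is whether its entry set equals $\Ap{\prod_{j=1}^m \tilde n_j}$, and both settle the inequality and the equality criterion by counting the $\prod_{j=1}^m \tilde n_j$ distinct entries against the consecutive integers $0,1,\dots,\mu_{\tilde n}-1$. Your version merely spells out the pigeonhole count and the preliminary identification $M_{1_m}=0$ a little more explicitly than the paper does.
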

\begin{proof}
$M_{[\tilde n]}$
inherits the ordering property and (V) from
$M.$
Hence it is a principal reversible cuboid if and only if
\begin{align}
 \{M_k : k \le \tilde n\} &= \Ap{\prod_{j=1}^m \tilde n_j}.
\nonumber\end{align}
If this is the case, then evidently (\ref{ePRsC}) holds;
if it is not the case, then
$M_{[\tilde n]},$
having
$\prod_{j=1}^m \tilde n_j$
different entries, must skip some element of
$\Ap{\prod_{j=1}^m \tilde n_j},$
so
$\mu_{\tilde n} < \prod_{j=1}^m \tilde n_j.$
\end{proof}
By Lemma \ref{lVprop}, the entries of
$M$
arise as sums of the corresponding entries along the coordinate axes of
$M.$
Let us define
\begin{align}
 a_{j,k} := M_{1_m + k e_j} &\qquad (j \in \{1, \dots, m\}, k \in \ap{n_j});
\nonumber\end{align}
then
the identity (\ref{eVprop}), with
$M_{1_m} = 0,$
gives
\begin{align}
 M_{1_m + k} &= \sum_{j=1}^m M_{1_m + k_j e_j} = \sum_{j=1}^m a_{j, k_j}
 \qquad (k \in {\mathbb N}_0^m, 1_m + k \le n).
\label{eMina}\end{align}
The following observation shows that for any subcuboid (containing the root element)
$M_{[\tilde n]},$
the smallest missing integer
$\mu_{\tilde n}$
appears on a coordinate axis of
$M,$
just outside
$M_{[\tilde n]}.$
\begin{lemma}\label{lfindmu}
Let
$\tilde n \le n,$
$\tilde n \neq n.$
Then there is
$j \in \{1, \dots, m\}$
such that
$\mu_{\tilde n} = a_{j, \tilde n_j}.$
\end{lemma}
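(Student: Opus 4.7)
The plan is to locate $\mu_{\tilde n}$ as an entry of $M$ and then show that its multiindex must lie on one of the coordinate axes, one step outside the subcuboid $M_{[\tilde n]}$. The main tools will be the additive formula (\ref{eMina}) and the strict row-monotonicity of the principal reversible cuboid $M$, applied once along the offending axis.

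First I would verify that $\mu_{\tilde n}$ actually occurs as an entry of $M$. Since $\tilde n \le n$ with $\tilde n \ne n$, at least one component is strictly smaller, so $\prod_{j=1}^m \tilde n_j < \prod_{j=1}^m n_j$; by Lemma \ref{lPRsC}, $\mu_{\tilde n} \le \prod_{j=1}^m \tilde n_j$, hence $\mu_{\tilde n} \in \ap{\prod_j n_j}$ and there is some $k^* \le n$ with $M_{k^*} = \mu_{\tilde n}$. Because this value is missing from $M_{[\tilde n]}$, we have $k^* \not\le \tilde n$, i.e.\ $k^*_i > \tilde n_i$ for some $i \in \{1, \ldots, m\}$; in particular $\tilde n_i < n_i$, so the ``axis candidate'' $k^{(i)} := 1_m + \tilde n_i e_i$ is a valid index satisfying $k^{(i)} \le n$ and $k^{(i)} \not\le \tilde n$.

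The second step is to compare $\mu_{\tilde n} = M_{k^*}$ with $a_{i, \tilde n_i} = M_{k^{(i)}}$. Since $k^{(i)} \not\le \tilde n$, $a_{i, \tilde n_i}$ is not an entry of $M_{[\tilde n]}$, so by definition $a_{i, \tilde n_i} \ge \mu_{\tilde n}$. For the reverse inequality I would invoke (\ref{eMina}) to write $M_{k^*} = \sum_{j=1}^m a_{j, k^*_j - 1}$, note that every term is non-negative (since $a_{j,0} = M_{1_m} = 0$ and $M$ is strictly increasing along each axis), and retain only the $i$-th summand; strict monotonicity along the $i$-th axis then gives $a_{i, k^*_i - 1} \ge a_{i, \tilde n_i}$ because $k^*_i - 1 \ge \tilde n_i$. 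Combining the chain
\begin{align}
 \mu_{\tilde n} = M_{k^*} = \sum_{j=1}^m a_{j, k^*_j - 1} \ge a_{i, k^*_i - 1} \ge a_{i, \tilde n_i} \ge \mu_{\tilde n}
\nonumber\end{align}
forces equality throughout, and in particular $\mu_{\tilde n} = a_{i, \tilde n_i}$, as required.

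I do not anticipate a substantive obstacle; the argument is essentially a one-line telescoping after identifying the right axis. The only point requiring a little care is checking that the axis candidate $1_m + \tilde n_i e_i$ is itself outside $M_{[\tilde n]}$ (so that it is a legitimate rival for the minimum missing value), which follows from $\tilde n_i + 1 > \tilde n_i$, and that the index $i$ extracted from $k^*$ indeed satisfies $\tilde n_i < n_i$ so that $a_{i, \tilde n_i}$ is defined.
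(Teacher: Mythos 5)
Your proof is correct and follows essentially the same route as the paper's: identify an index $k^*$ outside $M_{[\tilde n]}$ achieving $\mu_{\tilde n}$, observe that $a_{i,\tilde n_i}$ is also outside $M_{[\tilde n]}$ (hence $\ge \mu_{\tilde n}$), and use the monotonicity of the entries along the $i$-th axis (which you pass through the additive formula (\ref{eMina}), the paper through the direct comparison $M_{1_m+(\hat n_j-1)e_j}\le M_{\hat n}$) to close the chain of inequalities. The two arguments are equivalent in substance; yours is a little more explicit about locating $k^*$ and checking $\tilde n_i < n_i$, and it incidentally sidesteps a small range typo in the paper's displayed minimization.
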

\begin{proof}
By definition,
$\mu_{\tilde n}$
is the smallest entry of
$M$
outside
$M_{[\tilde n]},$
\begin{align}
 \mu_{\tilde n} &= \min\{M_{\hat n} : \hat n \le n, \hat n \not \le \tilde n\}.
\nonumber\end{align}
By the increasing arrangement of all lines parallel to coordinate axes, we have for any
$\hat n$
and any
$j \in \{1, \dots, m\}$
that
$a_{j, \hat n_j -1} = M_{1_m + (\hat n_j -1)e_j} \le M_{\hat n},$
so
\begin{align}
 \mu_{\tilde n} &= \min \{a_{j, \hat n_j -1} : j \in \{1, \dots, m\}, \tilde n_j < \hat n_j < n_j\}
\nonumber\\
 &= \min \{a_{j, \tilde n_j} : j \in \{1, \dots, m\}\},
\nonumber\end{align}
by the increasing arrangement of
$a_{j, \cdot}.$
\end{proof}
If the cuboid in Lemma \ref{lfindmu} arises by truncating a principal reversible
subcuboid in one direction only, the smallest missing integer must appear on
the axis of the direction of truncation, since the other directions would lead
outside the larger enclosing principal reversible subcuboid.
\begin{corollary}\label{ccutmu}
Let
$\tilde n \in {\mathbb N}^m,$
$\tilde n \le n,$
such that
$M_{[\tilde n]}$
is a principal reversible subcuboid. Let
$j \in \{1, \dots, m\}$
and
$\hat n \in {\mathbb N}^m$
such that
$\hat n_i = \tilde n_i$
$(i \neq j)$
and
$\hat n_j < \tilde n_j.$
Then
$\mu_{\hat n} = a_{j, \hat n_j}.$
\end{corollary}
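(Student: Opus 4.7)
The plan is to apply Lemma \ref{lfindmu} to the multiindex $\hat n$ and then show that the axis index it produces must equal $j$. Note first that $\hat n \neq n$, since $\hat n_j < \tilde n_j \le n_j$ forces $\hat n_j < n_j$. So Lemma \ref{lfindmu} gives some $i \in \{1, \dots, m\}$ with $\hat n_i < n_i$ such that $\mu_{\hat n} = a_{i, \hat n_i}$, and the task reduces to ruling out $i \neq j$.

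The key tool is the numerical separation provided by Lemma \ref{lPRsC}: since $M_{[\tilde n]}$ is a principal reversible subcuboid, its set of entries is precisely $\ap{\prod_{k=1}^m \tilde n_k}$. Consequently, every entry of $M$ whose index lies inside $\tilde n$ is at most $\prod_{k=1}^m \tilde n_k - 1$, while every entry whose index fails $\le \tilde n$ is at least $\prod_{k=1}^m \tilde n_k$.

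I would then compare the two types of candidates in the minimum of Lemma \ref{lfindmu}. For $i \neq j$ with $\hat n_i < n_i$, the equality $\hat n_i = \tilde n_i$ implies $a_{i, \hat n_i} = M_{1_m + \tilde n_i e_i}$; this index has $i$th coordinate $\tilde n_i + 1 > \tilde n_i$ and so lies outside $M_{[\tilde n]}$, yielding $a_{i, \hat n_i} \ge \prod_{k=1}^m \tilde n_k$. In contrast, $a_{j, \hat n_j} = M_{1_m + \hat n_j e_j}$ has $j$th coordinate $\hat n_j + 1 \le \tilde n_j$ (since $\hat n_j < \tilde n_j$) and all other coordinates equal to $1$, so it lies inside $M_{[\tilde n]}$; thus $a_{j, \hat n_j} \le \prod_{k=1}^m \tilde n_k - 1$. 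This strict inequality between $a_{j, \hat n_j}$ and every competitor $a_{i, \hat n_i}$ with $i \neq j$ forces the minimum to be attained uniquely at $i = j$, completing the proof.

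There is no real obstacle here; the corollary is essentially just a direct application of Lemma \ref{lfindmu} combined with the elementary observation that a principal reversible subcuboid's entries form an initial segment of the nonnegative integers, which cleanly partitions ``inside'' and ``outside'' entries by magnitude.
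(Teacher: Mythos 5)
Your proof is correct and follows essentially the same route as the paper, which justifies the corollary only by the remark preceding it: truncating a principal reversible subcuboid in direction $j$ alone means the competing candidates $a_{i,\hat n_i}$ for $i \neq j$ index entries outside $M_{[\tilde n]}$ and are therefore at least $\prod_{k=1}^m \tilde n_k$, whereas $a_{j,\hat n_j}$ lies inside. Your write-up just makes this magnitude comparison explicit via the initial-segment property of the subcuboid's entries.
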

The following statement gives an extension of this beyond the confines of the
principal reversible subcuboid.
\begin{lemma}\label{lmuchain}
Let
$\tilde n \le n,$
$\tilde n \neq n,$
be such that
$M_{[\tilde n]}$
is a proper principal reversible subcuboid of
$M,$
and let
$j \in \{1, \dots, m\}.$
Suppose for some
$k_0 \in {\mathbb N},$
$k_0 < \tilde n_j,$
\begin{align}
 a_{j, \tilde n_j + k} &= a_{j, k} + \prod_{i=1}^m \tilde n_i \qquad (k \in \ap{k_0}).
\nonumber\end{align}
Then
$\mu_{\tilde n + k_0 e_j} = a_{j, k_0} + \prod_{i=1}^m \tilde n_i.$
\end{lemma}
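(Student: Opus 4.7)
\bigskip
\noindent\textbf{Proof proposal.}
The plan is to decompose the subcuboid $M_{[\tilde n + k_0 e_j]}$ into two pieces: the original principal reversible subcuboid $M_{[\tilde n]}$ (at indices with $l_j\le \tilde n_j$), and the ``extension slabs'' with $l_j \in \{\tilde n_j+1,\dots,\tilde n_j+k_0\}$. Since $M_{[\tilde n]}$ is principal reversible, its set of entries is exactly $\langle N \rangle$, where $N := \prod_{i=1}^m \tilde n_i$. The whole argument then reduces to showing that the extension slabs contribute precisely a translate by $N$ of a known set.

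For the extension slabs, I would use the representation (\ref{eMina}): for $l\le \tilde n+k_0 e_j$ with $l_j>\tilde n_j$, writing $k := l_j-1-\tilde n_j \in \langle k_0\rangle$,
\begin{align}
 M_l &= a_{j,l_j-1} + \sum_{i\neq j} a_{i,l_i-1}
 = (a_{j,k} + N) + \sum_{i\neq j} a_{i,l_i-1}
 = M_{l - \tilde n_j e_j} + N,
\nonumber\end{align}
by the hypothesis on $a_{j,\tilde n_j+k}$. As $l$ ranges over the extension, $l' := l-\tilde n_j e_j$ ranges exactly over the index set of the subcuboid $M_{[\tilde n'']}$, where $\tilde n''$ agrees with $\tilde n$ except that $\tilde n''_j = k_0$. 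Hence the set of entries of $M_{[\tilde n+k_0 e_j]}$ equals $\langle N\rangle \cup (M_{[\tilde n'']}+N)$.

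Finally, since $M_{[\tilde n]}$ is principal reversible and $k_0 < \tilde n_j$, Corollary \ref{ccutmu} applies to the truncation $\tilde n''$ of $\tilde n$ in direction $j$, giving $\mu_{\tilde n''} = a_{j,k_0}$; in particular $\{0,1,\dots,a_{j,k_0}-1\}\subseteq M_{[\tilde n'']}$ while $a_{j,k_0}\notin M_{[\tilde n'']}$. Translating by $N$ and unioning with $\langle N\rangle$ shows that $\{0,1,\dots,N+a_{j,k_0}-1\}$ lies in $M_{[\tilde n+k_0 e_j]}$, whereas $N+a_{j,k_0}$ does not (it is $\ge N$, hence not in $\langle N\rangle$, and it is missing from $M_{[\tilde n'']}+N$). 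This yields $\mu_{\tilde n+k_0 e_j} = a_{j,k_0}+N$, as claimed.

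The main obstacle is conceptually minor: it is the bookkeeping that identifies the ``new'' slab of the enlarged cuboid with a genuine subcuboid $M_{[\tilde n'']}$ of $M_{[\tilde n]}$ via the translation formula above, so that Corollary \ref{ccutmu} can be invoked to pin down the smallest missing value.
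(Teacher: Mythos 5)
Your proposal is correct and follows essentially the same route as the paper: both decompose $M_{[\tilde n + k_0 e_j]}$ into the principal reversible subcuboid $M_{[\tilde n]}$ with entry set $\langle N\rangle$ and the extension slabs, use the hypothesis together with (\ref{eMina}) to identify the slab entries as a translate by $N=\prod_{i=1}^m \tilde n_i$ of the entries of the truncated subcuboid with $j$th dimension $k_0$, and then invoke Corollary \ref{ccutmu} to locate the smallest missing integer. Your write-up is, if anything, slightly more explicit than the paper's ``taking the minimum on both sides'' step.
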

\begin{proof}
By definition,
$\mu_{\tilde n + k_0 e_j}$
is the smallest positive integer not in the set
$\{M_{\hat n} : \hat n \le \tilde n + k_0 e_j\}.$
Since, by Lemma \ref{lPRsC},
$\{M_{\hat n} : \hat n \le \tilde n\} = \Ap{\prod_{i=1}^m \tilde n_i},$
we have in fact that
$\mu_{\tilde n + k_0 e_j}       $
is the smallest positive integer not in the set
\begin{align}
 \{M_{\hat n} &: \hat n_i \le \tilde n_i \ (i \neq j), \tilde n_j < \hat n_j \le \tilde n_j + k_0\}
\nonumber\\
 &= \left \{a_{j, \tilde n_j + k} + \sum_{i \neq j} a_{i, k_i} : k_i \in \ap{\tilde n_i} \ (i \neq j), k \in \ap{k_0} \right\}
\nonumber\\
 &= \left\{ \prod_{i=1}^m \tilde n_i + \sum_{i=1}^m a_{i, k_i} : k_i \in \ap{\tilde n_i} \ (i \neq j), k_j \in \ap{k_0} \right\}
\nonumber\\
 &= \prod_{i=1}^m \tilde n_i + \{M_{\hat n} : \hat n_i \le \tilde n_i \ (i \neq j), \hat n_j \le k_0\},
\nonumber\end{align}
using (\ref{eMina}) in the first and the hypothesis of the Lemma in the second
equality.
Taking the minimum on both sides, we find
$\mu_{\tilde n + k_0 e_j} = \mu_{\tilde n + (k_0 - \tilde n_j) e_j} + \prod_{i=1}^m \tilde n_i.$
Corollary \ref{ccutmu} gives
$\mu_{\tilde n + (k_0 - \tilde n_j) e_j} = a_{j, k_0},$
and the statement follows.
\end{proof}
The following lemma provides the key to understanding the structure of
principal reversible cuboids. Essentially it shows that, starting from
a principal reversible subcuboid, finding the entry of
$M$
giving the next integer in sequence and adding the slice in the
corresponding direction to the subcuboid, and continuing in this way,
the next integer in sequence will always be found in the same direction as
the previous one, until the addition of slices has completed a larger
principal reversible subcuboid (or exhausted $M$).
Thus the next integer in sequence can only appear in a new direction if
the starting point is a complete principal reversible subcuboid, not a
general subcuboid.
\begin{lemma}\label{lMkey}
Suppose
$M_{[\tilde n]}$
is a proper principal reversible subcuboid of
$M,$
$\tilde n \le n,$
$\tilde n \neq n,$
such that
for some
$j \in \{1, \dots, m\}$
and some
$K \in \{1, \dots, \tilde n_j-1\},$
\begin{align}
 a_{j, \tilde n_j + k} &= \mu_{\tilde n + k e_j} \qquad (k \in \ap{K}).
\nonumber\end{align}
Then either
$M_{[\tilde n + K e_j]}$
is a principal reversible cuboid, or
$\mu_{\tilde n + K e_j} = a_{j, \tilde n_j + K}.$
\end{lemma}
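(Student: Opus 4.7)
The plan is to establish the conclusion in three phases: pin down the values along the $j$-axis using the hypothesis, evaluate $\mu_{\tilde n + K e_j}$ by one further application of Lemma \ref{lmuchain}, and then carry out a case analysis based on where this value is located in $M$ according to Lemma \ref{lfindmu}.

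In the first phase, I would prove by induction on $k \in \ap K$ that
\begin{align*}
 a_{j, \tilde n_j + k} &= a_{j, k} + \prod_{\ell=1}^m \tilde n_\ell.
\end{align*}
The base case $k = 0$ is immediate: the hypothesis gives $a_{j, \tilde n_j} = \mu_{\tilde n}$, by Lemma \ref{lPRsC} one has $\mu_{\tilde n} = \prod_{\ell=1}^m \tilde n_\ell$ since $M_{[\tilde n]}$ is a principal reversible subcuboid, and $a_{j, 0} = M_{1_m} = 0$. For the inductive step (with $k' \le K - 1 < \tilde n_j$), Lemma \ref{lmuchain} applied at $k_0 = k'$ yields $\mu_{\tilde n + k' e_j} = a_{j, k'} + \prod_{\ell=1}^m \tilde n_\ell$, and combining this with the Lemma \ref{lMkey} hypothesis $a_{j, \tilde n_j + k'} = \mu_{\tilde n + k' e_j}$ closes the induction. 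One final application of Lemma \ref{lmuchain} at $k_0 = K$ then delivers $\mu_{\tilde n + K e_j} = a_{j, K} + \prod_{\ell=1}^m \tilde n_\ell$. Lemma \ref{lfindmu} in turn expresses this $\mu_{\tilde n + K e_j}$ as the minimum of $a_{j, \tilde n_j + K}$ and $\{a_{i, \tilde n_i} : i \neq j\}$; if the minimum is attained at $i = j$ we immediately obtain $\mu_{\tilde n + K e_j} = a_{j, \tilde n_j + K}$, the second alternative of the conclusion.

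Otherwise the minimum is strictly attained at some $i_0 \neq j$, so that $a_{i_0, \tilde n_{i_0}} = a_{j, K} + \prod_{\ell=1}^m \tilde n_\ell < a_{j, \tilde n_j + K}$, and the task becomes to verify that $M_{[\tilde n + K e_j]}$ is itself a principal reversible cuboid. Here my plan is to use property~(V) together with the just-established identity to observe that the added slices have entries precisely $\prod_{\ell=1}^m \tilde n_\ell$ plus the entries of the smaller subcuboid $M_{[\hat n]}$ where $\hat n_j = K$ and $\hat n_\ell = \tilde n_\ell$ for $\ell \neq j$; hence the principal reversible property for $M_{[\tilde n + K e_j]}$ reduces to the analogous property for $M_{[\hat n]}$. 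Combining $\mu_{\tilde n + K e_j} = a_{j, K} + \prod_{\ell=1}^m \tilde n_\ell$ with Lemma \ref{lfindmu} applied to $\hat n$ pins down $\mu_{\hat n} = a_{j, K}$, so the outstanding equality is $a_{j, K} = K \prod_{\ell \neq j} \tilde n_\ell$. I expect this final step to be the main obstacle: the fact that the value $a_{j, K} + \prod_{\ell=1}^m \tilde n_\ell$ surfaces on the $i_0$-axis rather than along the $j$-axis has to be leveraged --- in combination with the distinctness of $M$'s entries and the palindromic axis symmetry supplied by Theorem \ref{tsspalin} --- to rule out the strict inequality $a_{j, K} < K \prod_{\ell \neq j} \tilde n_\ell$.
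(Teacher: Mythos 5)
Your setup coincides with the paper's: the recursive application of Lemma \ref{lmuchain} to obtain $\mu_{\tilde n + K e_j} = a_{j,K} + \prod_{i=1}^m \tilde n_i$, the appeal to Lemma \ref{lfindmu} to split into the case where the minimum is attained on the $j$-axis (giving the second alternative) and the case where it is attained at some $i_0 \neq j$, and the reduction of the first alternative to showing $a_{j,K} = K\prod_{i\neq j}\tilde n_i$ (equivalently, that $M_{[\tilde n + (K-\tilde n_j)e_j]}$ is itself principal reversible). All of that is correct and is exactly how the paper proceeds.

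However, the proposal stops precisely at the point where the real work of the lemma begins: you acknowledge that ruling out the strict inequality $a_{j,K} < K\prod_{i\neq j}\tilde n_i$ is ``the main obstacle'' and name the ingredients (distinctness of entries, palindromic symmetry) without producing the argument. That is a genuine gap, not a routine verification. The paper's mechanism is as follows. If $a_{j,K} < K\prod_{i\neq j}\tilde n_i$, then combining with $a_{j,\tilde n_j - K} = \mu_{\tilde n - K e_j} \le (\tilde n_j - K)\prod_{i\neq j}\tilde n_i$ (Lemma \ref{lPRsC} and Corollary \ref{ccutmu}) gives $a_{j,K} + a_{j,\tilde n_j - K} < \prod_{i=1}^m \tilde n_i$, so this sum is an entry of the principal reversible subcuboid $M_{[\tilde n]}$ and can be written as $\sum_{i=1}^m a_{i,k_i}$. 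The reversal symmetry of $M_{[\tilde n]}$ (Theorem \ref{trevsym}, applied along the $j$-axis) converts this identity into $a_{j,K} + a_{j,\tilde k_j} = a_{j,K-1} + \sum_{i\neq j} a_{i,k_i}$ with $\tilde k_j = \tilde n_j - 1 - k_j$; adding $\prod_{r=1}^m \tilde n_r$ to both sides and using $\mu_{\tilde n + K e_j} = a_{i_0,\tilde n_{i_0}} = a_{j,K} + \prod_{r=1}^m \tilde n_r$ exhibits a common value of the slice $\{\hat n : \hat n_i \le \tilde n_i\ (i\neq j),\ \hat n_j = \tilde n_j + K\}$ and the disjoint slice $\{\hat n : \hat n_i \le \tilde n_i\ (i\neq i_0),\ \hat n_{i_0} = \tilde n_{i_0}+1\}$, contradicting the distinctness of the entries of $M$. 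Without this construction (or an equivalent one), the dichotomy claimed in the lemma is not established.
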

\begin{proof}
Applying Lemma \ref{lmuchain} recursively to
$k_0 \in \{1, \dots, K\},$
we find that
\begin{align}
 a_{j, \tilde n_j + K -1} &= \mu_{\tilde n + (K-1) e_j} = a_{j, K-1} + \prod_{i=1}^m \tilde n_i,
\nonumber\end{align}
and
\begin{align}
 \mu_{\tilde n + K e_j} &= a_{j,K} + \prod_{i=1}^m \tilde n_i.
\label{e2}\end{align}
Hence the slice of
$M$
with indices
$\hat n_i \le \tilde n_i$
$(i \neq j),$
$\hat n_j = \tilde n_j + K$
has entries
\begin{align}
 S_1 &:= \{M_{\hat n} : \hat n_i \le \tilde n_i \ (i \neq j), \hat n_j = \tilde n_j + K\}
\nonumber\\
 &= \left\{ \sum_{i=1}^m a_{i,k_i} : 0 \le k_i \le \tilde n - 1 \ (i \neq j), k_j = \tilde n_j + K - 1 \right\}
\nonumber\\
 &= \left\{ a_{j,K-1} + \prod_{r=1}^m \tilde n_r + \sum_{i \neq j} a_{i,k_i} : 0 \le k_i \le \tilde n - 1 \ (i \neq j) \right\}
\nonumber\end{align}
using (\ref{eMina}).
Now suppose that
$\mu_{\tilde n + K e_j} \neq a_{j, \tilde n_j + K}.$
By Lemma \ref{lfindmu}, there is then
$l~\in~\{1, \dots, m\},$
$l \neq j,$
such that
$\mu_{\tilde n + K e_j} = a_{l, \tilde n_l}.$
This means (again by (\ref{eMina})) that the slice of
$M$
with indices
$\hat n_i \le \tilde n_i$
$(i \neq l),$
$\hat n_l = \tilde n_l + 1$
has entries
\begin{align}
 S_2 &:= \{M_{\hat n} : \hat n_i \le \tilde n_i \ (i \neq l), \hat n_l = \tilde n_l + 1\}
\nonumber\\
 &= \left\{\sum_{i=1}^m a_{i, \tilde k_i} : 0 \le \tilde k_i \le \tilde n_i - 1 \ (i \neq l), \tilde k_l = \tilde n_l \right\}.
\nonumber\end{align}
As the index sets appearing in the definitions of
$S_1$
and
$S_2$
are disjoint and all entries of
$M$
are different, it follows that
$S_1 \cap S_2 = \emptyset.$
\par
Now if
$M_{[\tilde n + (K- \tilde n_j) e_j]}$
is a principal reversible subcuboid, then
$M_{[\tilde n + K e_j]}$
will have entries
\begin{align}
 \Ap{\prod_{i=1}^m \tilde n_i} \cup \left(\prod_{i=1}^m \tilde n_i + \Ap{K \prod_{i \neq j} \tilde n_i}\right) &= \Ap{(\tilde n_j + K) \prod_{i \neq j}\tilde n_i}
\nonumber\end{align}
and hence be a principal reversible subcuboid.
If, on the other hand,
$M_{[\tilde n + (K- \tilde n_j) e_j]}$
is not a principal reversible subcuboid, then by Lemma \ref{lPRsC} and Corollary \ref{ccutmu},
\begin{align}
 a_{j,K} &= \mu_{\tilde n + (K- \tilde n_j) e_j} < K \prod_{i \neq j} \tilde n_i;
\nonumber\end{align}
also,
\begin{align}
 a_{j, \tilde n_j - K} &= \mu_{\tilde n - K e_j} \le (\tilde n_j-K) \prod_{i \neq j} \tilde n_i,
\nonumber\end{align}
so
$\displaystyle a_{j,K} + a_{j, \tilde n_j - K} < \prod_{i=1}^m \tilde n_i.$
As
$M_{[\tilde n]}$
is a principal reversible subcuboid with entries
$\displaystyle \Ap{\prod_{i=1}^m \tilde n_i},$
there are suitable indices
$k_i \in \ap{\tilde n_i}$
$(i \in \{1, \dots, m\})$
such that
\begin{align}
 a_{j,K} + a_{j, \tilde n_j - K} &= \sum_{i=1}^m a_{i, k_i}.
\nonumber\end{align}
Setting
$\tilde k_j := \tilde n_j - 1 - k_j \in \ap{\tilde n_j},$
this equation can be written in the form
\begin{align}
 a_{j,K} + a_{j,\tilde n_j-K} &= a_{j,\tilde n_j - 1 - \tilde k_j} + \sum_{i \neq j} a_{i,k_i}.
\label{e1}\end{align}
By the reversal symmetry of the principal reversible subcuboid
$M_{[\tilde n]}$
(Theorem \ref{trevsym}),
the numbers
$0 = a_{j,0} < a_{j,1} < \cdots < a_{j,\tilde n_j-1}$
have the property
\begin{align}
 a_{j, \tilde n_j -1} &= a_{j,r} - a_{j, \tilde n_j-1-r} \qquad (r \in \ap{\tilde n_j}),
\nonumber\end{align}
so in particular
$a_{j, \tilde n_j-1-\tilde k_j} = a_{j, \tilde n_j-1} - a_{j, \tilde k_j}$
and
$a_{j,\tilde n_j - K} = a_{j, \tilde n_j-1} - a_{j, K-1}.$
Hence equation (\ref{e1}) is equivalent to
\begin{align}
 a_{j,K} + a_{j, \tilde k_j} &= a_{j,K-1} + \sum_{i \neq j} a_{i,k_i}.
\nonumber\end{align}
Taking into account (\ref{e2}), we hence find
\begin{align}
 S_2 \ni a_{l, \tilde n_l} + a_{j, \tilde k_j} &= \mu_{\tilde n + K e_j} + a_{j, \tilde k_j}
 = a_{j,K-1} + \prod_{r=1}^m \tilde n_r + \sum_{i \neq j} a_{i,k_i} \in S_1.
\nonumber\end{align}
This contradicts the fact that
$S_1$
and
$S_2$
are disjoint.
\end{proof}
Clearly, given two principal reversible subcuboids of
$M,$
one must contain the other, since both contain a consecutive sequence of
integers starting from 0 and the entries of
$M$
are all different.
Therefore the concept of
{\it maximality\/}
of a proper principal reversible subcuboid of
$M$
is well-defined, and there is a unique maximal proper principal reversible subcuboid of
$M.$
\begin{theorem}\label{tbasicstruc}
Let
$m \in {\mathbb N}+1,$
$n \in {\mathbb N}^m \setminus \{1_m\},$
and
$M \in {\mathbb N}_0^n$
a principal reversible cuboid.
If
$M_{[\tilde n]},$
with
$\tilde n \le n,$
$\tilde n \neq n,$
is a maximal proper principal reversible subcuboid, then there is
$j \in \{1, \dots, m\}$
such that
$\tilde n_i = n_i$
$(i \neq j).$
\end{theorem}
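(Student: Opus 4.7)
The plan is to argue by contradiction. Suppose that, in addition to the given $\tilde n_j < n_j$ for some index $j$ (which holds since $\tilde n \neq n$), one also has $\tilde n_i < n_i$ for some $i \neq j$; I will construct a principal reversible subcuboid of $M$ that strictly contains $M_{[\tilde n]}$ but is still a proper subcuboid of $M$, contradicting the maximality of $M_{[\tilde n]}$. By Lemma \ref{lfindmu} I may refine the choice of $j$ so that $\mu_{\tilde n} = a_{j, \tilde n_j}$; the enlargement will then be carried out by appending slices to $M_{[\tilde n]}$ in the $e_j$-direction.

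The central task is to find some $K \in \{1, \dots, n_j - \tilde n_j\}$ for which $M_{[\tilde n + K e_j]}$ is itself a principal reversible subcuboid. The condition $\mu_{\tilde n} = a_{j, \tilde n_j}$ is precisely what Lemma \ref{lMkey} requires for its case $K = 1$, and the lemma's dichotomy then drives an iteration: at each step, either $M_{[\tilde n + K e_j]}$ is already a PRC (and the search is complete), or $\mu_{\tilde n + K e_j} = a_{j, \tilde n_j + K}$, which supplies the additional hypothesis needed to apply Lemma \ref{lMkey} with $K$ replaced by $K + 1$. I would push this iteration as far as possible in the admissible range $K \in \{1, \dots, \min(\tilde n_j - 1, n_j - \tilde n_j)\}$.

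Two terminal scenarios then have to be handled. If the iteration reaches $K = n_j - \tilde n_j$ with $n_j - \tilde n_j \le \tilde n_j - 1$, the ``continue'' alternative of Lemma \ref{lMkey} would require the nonexistent axis value $a_{j, n_j}$, so the PRC alternative must hold. If instead $n_j \ge 2 \tilde n_j$ and the iteration runs all the way to $K = \tilde n_j - 1$ with the ``continue'' outcome at every step (which in the edge case $\tilde n_j = 1$ holds vacuously), then Lemma \ref{lmuchain}, applied inductively, yields the shift identity $a_{j, \tilde n_j + k} = a_{j, k} + \prod_{i=1}^m \tilde n_i$ for $k \in \ap{\tilde n_j}$. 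Using the sum expression (\ref{eMina}) for $M_{\hat n}$ and partitioning according to the value of $\hat n_j$, one computes that the set of entries of $M_{[\tilde n + \tilde n_j e_j]}$ is $\ap{\prod_i \tilde n_i} \cup (\prod_i \tilde n_i + M_{[\tilde n]}) = \ap{2\prod_i \tilde n_i}$, whence by Lemma \ref{lPRsC} this region is a PRC (the condition $2\tilde n_j \le n_j$ ensures $\tilde n + \tilde n_j e_j \le n$).

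In either scenario I obtain a principal reversible subcuboid $M_{[\tilde n + K e_j]}$ strictly larger than $M_{[\tilde n]}$ but still contained in $M$. The standing assumption that $\tilde n_i < n_i$ for some $i \neq j$ guarantees $\tilde n + K e_j \neq n$, so this subcuboid is a proper principal reversible subcuboid of $M$, contradicting the maximality of $M_{[\tilde n]}$. Hence no such $i \neq j$ exists, and $\tilde n_i = n_i$ for every $i \neq j$, as claimed. I expect the main technical obstacle to be the ``doubling'' step, which is required whenever the iteration of Lemma \ref{lMkey} runs off its admissible range $K \le \tilde n_j - 1$ before reaching $n_j$: there one must combine the shift identity from Lemma \ref{lmuchain} with the additive representation (\ref{eMina}) to verify directly that $M_{[\tilde n + \tilde n_j e_j]}$ has the correct set of entries.
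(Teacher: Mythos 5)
Your proof is correct and follows essentially the same route as the paper's: argue by contradiction, use Lemma \ref{lfindmu} to locate the direction $j$ with $\mu_{\tilde n} = a_{j,\tilde n_j}$, and iterate Lemma \ref{lMkey} until a strictly larger, still proper, principal reversible subcuboid is completed in direction $j$, contradicting maximality. Your explicit treatment of the two terminal cases (in particular the doubling step via Lemma \ref{lmuchain} and (\ref{eMina})) supplies detail that the paper's own proof of this theorem leaves implicit and only carries out later, in the proof of Theorem \ref{tbasicfact}.
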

\begin{proof}
Suppose
$\tilde n_j < n_j$
and
$\tilde n_k < n_k$
for some
$j \neq k.$
Then, by Lemma \ref{lfindmu},
$\mu_{\tilde n} \in \{a_{j,\tilde n_j}, a_{k, \tilde n_k}\};$
w.l.o.g. let
$\mu_{\tilde n} = a_{j,\tilde n_j}.$
Then, by Lemma  \ref{lMkey},
the next smallest missing number from each extension
$M_{[\tilde n + e_j]},$
$M_{[\tilde n + 2 e_j]}, \dots$
of
$M_{[\tilde n]}$
in direction
$j$
will again be found in direction
$j,$
until a larger principal reversible subcuboid
$M_{[\tilde n + K e_j]}$
is completed, with some
$K > 0.$
As the $k$th entry of the multiindex
$\tilde n + K e_j$
is equal to
$\tilde n_k < n_k,$
$M_{[\tilde n + K e_j]} \neq M;$
on the other hand,
$M_{[\tilde n]}$
is a proper principal reversible subcuboid of
$M_{[\tilde n + K e_j]},$
contradicting its maximality.
\end{proof}
\begin{theorem}\label{tbasicfact}
Let
$m \in {\mathbb N}+1,$
$n \in {\mathbb N}^m \setminus \{1_m\}$
and
$M \in {\mathbb N}_0^n$
a principal reversible cuboid.
Then there is some
$j \in \{1, \dots, m\}$
and some
$\tilde n \in {\mathbb N}^m$
such that
$\tilde n_i = n_i$
$(i \neq j),$
$\tilde n_j < n_j,$
$\tilde n_j | n_j$
and
$M_{[\tilde n]}$
is a principal reversible subcuboid of
$M.$
Moreover, for any
$\hat n \le \tilde n$
and
$k \in \Ap{\frac{n_j}{\tilde n_j}},$
we have
$M_{\hat n + k \tilde n_j e_j} = M_{\hat n} + k \prod_{i=1}^m \tilde n_i.$
\end{theorem}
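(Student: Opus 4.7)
The plan is to locate the tile $\tilde n$ by tracking the chain of principal reversible subcuboids of $M$ and identifying the earliest one from which all further extensions proceed in a single direction. As observed just before Theorem \ref{tbasicstruc}, the principal reversible subcuboids of $M$ form an inclusion chain $1_m = \tilde n^{(0)} \subset \tilde n^{(1)} \subset \cdots \subset \tilde n^{(L)} = n$, and Theorem \ref{tbasicstruc} applied to each $M_{[\tilde n^{(l+1)}]}$ (whose maximal proper principal reversible subcuboid is $\tilde n^{(l)}$) ensures that consecutive members differ in exactly one coordinate direction $j_l$. I set $j := j_{L-1}$, let $l^*$ be the smallest index with $j_l = j$ for every $l \in \{l^*, \ldots, L-1\}$, and take $\tilde n := \tilde n^{(l^*)}$; by construction $\tilde n_i = n_i$ for $i \neq j$ and $\tilde n_j < n_j$.

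For each $l \ge l^*$ the subcuboid $\tilde n^{(l)}$ satisfies $\tilde n^{(l)}_i = n_i$ for $i \neq j$, so Lemma \ref{lfindmu} forces $\mu_{\tilde n^{(l)}} = a_{j, \tilde n^{(l)}_j}$. Iterating Lemma \ref{lMkey} along direction $j$ from $\tilde n^{(l)}$ (and feeding its successive conclusions into Lemma \ref{lmuchain} to supply the next iterate's hypothesis) I obtain, with $K^{(l)} := \tilde n^{(l+1)}_j - \tilde n^{(l)}_j$, the axis-translation identity
\[ a_{j, \tilde n^{(l)}_j + k} = a_{j, k} + \prod_{i} \tilde n^{(l)}_i \qquad (k \in \langle K^{(l)} \rangle). \]
By (\ref{eMina}) this expresses the block of $M$ with indices $\hat n_j \in \{\tilde n^{(l)}_j + 1, \ldots, \tilde n^{(l+1)}_j\}$ and $\hat n_i \le n_i$ ($i \neq j$) as the shift by $\prod_i \tilde n^{(l)}_i$ of the block with $\hat n_j \le K^{(l)}$; comparing entry sets with the principal reversibility of $M_{[\tilde n^{(l+1)}]}$ and $M_{[\tilde n^{(l)}]}$, the initial block must carry the entries $\langle K^{(l)} \prod_{i\neq j} \tilde n^{(l)}_i \rangle$ in full, so the subcuboid $\hat n^{(l)}$ with $\hat n^{(l)}_j = K^{(l)}$ and $\hat n^{(l)}_i = n_i$ ($i \neq j$) is itself principal reversible. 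By totality of the chain together with the definition of $l^*$, necessarily $\hat n^{(l)} = \tilde n^{(s)}$ for some $s \in \{l^*, \ldots, l\}$.

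The divisibility $\tilde n_j \mid n_j$ now follows by induction on $l$: assuming $\tilde n_j \mid \tilde n^{(r)}_j$ for every $r \le l$, the preceding identification gives $\tilde n_j \mid \tilde n^{(s)}_j = K^{(l)}$, and hence $\tilde n_j \mid \tilde n^{(l+1)}_j = \tilde n^{(l)}_j + K^{(l)}$; at $l = L - 1$ this yields $\tilde n_j \mid n_j$. The stacking identity is obtained by the same bookkeeping: telescoping the per-step axis-translations along the chain produces $a_{j, k \tilde n_j + r} = a_{j, r} + k \prod_i \tilde n_i$ for $r \in \langle \tilde n_j \rangle$ and $k \in \langle n_j/\tilde n_j \rangle$, after which (\ref{eMina}) upgrades the axis identity to $M_{\hat n + k \tilde n_j e_j} = M_{\hat n} + k \prod_i \tilde n_i$ for all $\hat n \le \tilde n$. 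The main technical obstacle is to establish that each initial block $\hat n^{(l)}$ really is a principal reversible subcuboid of $M$, since this is what places $\hat n^{(l)}$ in the chain and thereby activates the inductive step for both divisibility and stacking.
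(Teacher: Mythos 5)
Your proof is correct and takes essentially the same route as the paper: fix the minimal principal reversible subcuboid $\tilde n$ filling $M$ in all but one direction $j$, derive the axis-translation identity from Lemmas \ref{lmuchain} and \ref{lMkey}, and deduce that every increment in the chain is a multiple of $\tilde n_j$. Your explicit bookkeeping of the chain $\tilde n^{(l^*)} \subset \cdots \subset \tilde n^{(L)} = n$, the identification of the ``initial block'' $\hat n^{(l)}$ as a principal reversible subcuboid lying in the chain at some index $s \in \{l^*, \dots, l\}$, and the resulting induction on $l$ are a more detailed justification of the step that the paper phrases somewhat informally as ``by minimality of $M_{[\tilde n]}$, $M$ must be composed of a number of complete offset copies of it.''
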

\begin{proof}
Since
$M$
is not just the trivial $m$-cuboid
$(0) \in {\mathbb N}_0^{1_m},$
it has a maximal proper principal reversible subcuboid
$M_{n'},$
with
$n' \le n,$
$n' \neq n;$
note that
$n' = 1_m$
and hence
$M_{n'} = (0)$
is possible.
By Theorem \ref{tbasicstruc}, there is
$j \in \{1, \dots, m\}$
such that
$n'_i = n_i$
$(i \neq j)$
and
$n'_j < n_j.$
Let
$\tilde n \in {\mathbb N}^m$
be such that
$\tilde n_i = n_i$
$(i \neq j)$
and
such that
$\tilde n_j$
is the minimal number for which
$M_{[\tilde n]}$
is a principal reversible subcuboid of
$M.$
\par
By Lemmas \ref{lmuchain} and \ref{lMkey} and eq. (\ref{eMina}), we find
\begin{align}
 M_{\hat n + \tilde n_j e_j} &= \sum_{i \neq j} a_{i, \hat n_i - 1} + a_{j, \hat n_j - 1} + \prod_{r=1}^m \tilde n_r
 = M_{\hat n} + \prod_{r=1}^m \tilde n_r
\nonumber\end{align}
for
$\hat n \le \tilde n,$
and this makes
$M_{[\tilde n + \tilde n_j e_j]}$
a principal reversible subcuboid which is composed of
$M_{[\tilde n]}$
and an a copy of this cuboid with entries offset with
$\prod_{r=1}^m \tilde n_r.$
Applying the same reasoning to this larger subcuboid, if it is not already
equal to
$M,$
gives the last statement of the Theorem.
\par
By minimality of
$M_{[\tilde n]},$
the principal reversible cuboid
$M$
must be composed of a number of complete offset copies of it to contain a
complete arithmetic sequence. Hence it follows that
$\tilde n_j$
is a divisor of
$n_j.$
\end{proof}
\section{Building operators and joint ordered factorisations}\label{sjof}
Theorem \ref{tbasicfact} has shown that every principal reversible cuboid, except
the trivial
$(0) \in {\mathbb N}_0^{1_m},$
is composed of shifted copies of a smaller principal reversible cuboid, stacked
in one of the
$m$
directions. By recursion, this observation gives a description of principal
reversible cuboids which can be used to construct them. In order to make the
construction more transparent, we introduce building operators which describe
the stacking process.
\par
We shall use the following notation.
For
$k \in {\mathbb N},$
we denote the arithmetic progression vector by
\def\apv#1{\overrightarrow{\ap{#1}}}
\def\Apv#1{\overrightarrow{\Ap{#1}}}
\begin{align}
 \apv k &= (0, 1, 2, \dots, k-1).
\nonumber\end{align}
Moreover, for
$m \in {\mathbb N}$
and any multiindex
$n \in {\mathbb N}^m,$
we write
$1_{[n]}$
for the cuboid with dimension vector
$n$
and all entries equal to 1.
\par\medskip
{\it Definition.}
Let
$k, m \in {\mathbb N},$
$j \in \{1, \dots, m\},$
$n \in {\mathbb N}^m,$
$v \in {\mathbb N}_0^k$
and
$M \in {\mathbb N}_0^n.$
Then we define the
{\it direction $j$ Kronecker product\/}
of
$v$
with
$M$
as
$v \otimes_j M,$
where
\begin{align}
 (v \otimes_j M)_{\hat n + l n_j e_j} &= v_{l+1} M_{\hat n}
 \qquad (\hat n \le n, l \in \ap k).
\nonumber\end{align}
If
$m = 1,$
then this product turns into the standard Kronecker product of the vectors
$v \in {\mathbb N}_0^k$
and
$w = M \in {\mathbb N}_0^{n_1},$
i.e.
\begin{align}
 (v \otimes w)_{l_1 n_1 + l_2 + 1} &= v_{l_1+1} w_{l_2+1} \qquad (l_1 \in \ap k, l_2 \in \ap{n_1}).
\nonumber\end{align}
This product is obviously bilinear.
\begin{lemma}\label{lprass}
The direction $j$ Kronecker product is associative, i.e. for
$k_1, k_2, m \in {\mathbb N},$
$n \in {\mathbb N}^m,$
$v \in {\mathbb N}_0^{k_1},$
$w \in {\mathbb N}_0^{k_2}$
and
$M \in {\mathbb N}_0^n,$
\begin{align}
 v \otimes_j (w \otimes_j M) &= (v \otimes w) \otimes_j M.
\nonumber\end{align}
\end{lemma}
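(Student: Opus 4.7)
The plan is to verify associativity by unwinding both sides on an arbitrary entry of the resulting tensor and checking that they yield the same value. Both sides live in ${\mathbb N}_0^{n''}$ where $n''_i = n_i$ for $i \neq j$ and $n''_j = k_1 k_2 n_j$, so it suffices to evaluate at a generic multiindex of this cuboid and use the uniqueness of the ``base-$n_j$, base-$k_2$'' decomposition of its $j$th component.

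First I would write any such multiindex in the form $\hat n + l n_j e_j + l' k_2 n_j e_j$, where $\hat n \le n$, $l \in \ap{k_2}$ and $l' \in \ap{k_1}$; this representation is unique because the $j$th coordinate ranges over $\{1,\dots,k_1 k_2 n_j\}$ and decomposes uniquely as $\hat n_j + l n_j + l' k_2 n_j$. Applying the definition of $\otimes_j$ twice to the left-hand side then gives
\begin{align}
\bigl(v \otimes_j (w \otimes_j M)\bigr)_{\hat n + l n_j e_j + l' k_2 n_j e_j}
 &= v_{l'+1}\,(w \otimes_j M)_{\hat n + l n_j e_j}
 = v_{l'+1}\,w_{l+1}\,M_{\hat n},
\nonumber\end{align}
where the intermediate step uses that $w \otimes_j M$ has $j$th dimension $k_2 n_j$, so $k_2 n_j$ plays the role of $n_j$ in the outer product.

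For the right-hand side, I would first expand using the direction $j$ Kronecker product of $v \otimes w \in {\mathbb N}_0^{k_1 k_2}$ with $M$, writing the index in the form $\hat n + L n_j e_j$ with $L = l' k_2 + l \in \ap{k_1 k_2}$. This gives
\begin{align}
\bigl((v \otimes w) \otimes_j M\bigr)_{\hat n + (l' k_2 + l) n_j e_j}
 &= (v \otimes w)_{L+1}\,M_{\hat n}
 = v_{l'+1}\,w_{l+1}\,M_{\hat n},
\nonumber\end{align}
where the last equality uses the explicit formula for the standard Kronecker product of vectors given in the definition, with the parameters $k = k_1$ and $n_1 = k_2$. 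Comparing the two expressions at the same multiindex yields the identity.

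The only real obstacle is bookkeeping: one must keep track of three different index parameters ($\hat n$, $l$, $l'$) and the two different ``block sizes'' $n_j$ and $k_2 n_j$ that govern the stacking at each level. The key algebraic fact making the proof work is that the labelling $L + 1 = l' k_2 + l + 1$ used in the standard Kronecker product is exactly the same labelling that arises from iterating the direction $j$ Kronecker product in the $j$th coordinate, so associativity reduces to the equality $(l' k_2 + l) n_j = l n_j + l' k_2 n_j$ combined with the identification $v_{l'+1} w_{l+1} = (v \otimes w)_{l' k_2 + l + 1}$.
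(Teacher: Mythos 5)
Your proof is correct and follows essentially the same route as the paper's: decompose the $j$th index component as $\hat n_j + l n_j + l' k_2 n_j$, peel off the two Kronecker layers on the left-hand side, and match against the single layer on the right via the identity $(v\otimes w)_{l'k_2+l+1}=v_{l'+1}w_{l+1}$. In fact, your version states the intermediate index more carefully than the paper does (the paper's displayed chain contains a small typo, writing $l_1$ where $l_2$ is meant in the second line).
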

\begin{proof}
For any
$\hat n \le n$
and
$l = l_1 k_2 + l_2 \in \ap{k_1 k_2},$
$l_1 \in \ap {k_1},$
$l_2 \in \ap {k_2},$
we find
\begin{align}
 (v \otimes_j (w \otimes_j M))_{\hat n + l n_j e_j} &= (v \otimes_j (w \otimes_j M))_{\hat n + l_2 n_j e_j + l_1 k_2 n_j e_j}
\nonumber\\
 &= v_{l_1+1} (w \otimes_j M)_{\hat n + l_1 n_j e_j}
 = v_{l_1+1} w_{l_2+1} M_{\hat n}
\nonumber\\
 &= (v \otimes w)_{l+1} M_{\hat n}
 = ((v \otimes w) \otimes_j M)_{\hat n + l n_j e_j},
\nonumber\end{align}
as required.
\end{proof}
{\it Definition.}
Let
$k, m \in {\mathbb N},$
$j \in \{1, \dots, m\}.$
Then we define the
{\it building operator\/}
\def\cbo{{\cal B}}
$\cbo_{j,k}$
as the operation which turns any cuboid
$M \in {\mathbb N}_0^n,$
$n \in {\mathbb N}^m,$
into
\begin{align}
 \cbo_{j,k}(M) &= \left(\prod_{r=1}^m n_r \right) \apv k \otimes_j 1_{[n]} + 1_k \otimes_j M \in {\mathbb N}_0^{n + (k-1)n_j e_j}.
\nonumber\end{align}
\par\medskip\noindent
The following observation shows that the composition of two building
operators working in the same coordinate direction is just one
building operator.
\begin{lemma}\label{lcborepeat}
Let
$k_1, k_2, m \in {\mathbb N}$
and
$j \in \{1, \dots, m\}.$
Then
$\cbo_{j,k_1} \circ \cbo_{j,k_2} = \cbo_{j,k_1 k_2}.$
\end{lemma}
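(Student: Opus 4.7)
The plan is to apply both sides of the asserted identity to an arbitrary cuboid $M \in {\mathbb N}_0^n$, expand via the definition of $\cbo_{j,k}$, and then use bilinearity of the direction $j$ Kronecker product together with the associativity result of Lemma \ref{lprass} to collect terms into the form $\cbo_{j,k_1 k_2}(M)$.

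First I would apply $\cbo_{j,k_2}$ to $M$, producing a cuboid of dimensions $n + (k_2-1) n_j e_j$, whose total number of entries is $k_2 \prod_{r=1}^m n_r$. Then I would apply $\cbo_{j,k_1}$, observing that the constant-$1$ cuboid of the new dimensions factors as $1_{[n + (k_2-1) n_j e_j]} = 1_{k_2} \otimes_j 1_{[n]}$. Expanding by bilinearity produces three summands:
\begin{align}
 &k_2 \Bigl(\prod_{r=1}^m n_r\Bigr) \apv{k_1} \otimes_j 1_{k_2} \otimes_j 1_{[n]}
 + \Bigl(\prod_{r=1}^m n_r\Bigr) 1_{k_1} \otimes_j \apv{k_2} \otimes_j 1_{[n]}
 + 1_{k_1} \otimes_j 1_{k_2} \otimes_j M.
\nonumber\end{align}

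By Lemma \ref{lprass}, each of these may be reassembled as a single direction $j$ Kronecker product with $1_{[n]}$ or $M$ on the right, using the ordinary Kronecker product of the remaining vectors. The third summand becomes $(1_{k_1} \otimes 1_{k_2}) \otimes_j M = 1_{k_1 k_2} \otimes_j M$, matching the second term of $\cbo_{j, k_1 k_2}(M)$. For the first two summands, the key combinatorial identity is
\begin{align}
 k_2\,(\apv{k_1} \otimes 1_{k_2}) + 1_{k_1} \otimes \apv{k_2} &= \apv{k_1 k_2},
\nonumber\end{align}
which is nothing but the uniqueness of the base-$k_2$ representation $l = l_1 k_2 + l_2$ with $l_1 \in \ap{k_1}$, $l_2 \in \ap{k_2}$; it is verified by reading off the $(l_1 k_2 + l_2 + 1)$-th entry of both sides. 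This produces $(\prod_{r=1}^m n_r) \apv{k_1 k_2} \otimes_j 1_{[n]}$, the remaining term of $\cbo_{j, k_1 k_2}(M)$.

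No step is a serious obstacle; the only item requiring care is bookkeeping of the prefactor, since $\cbo_{j,k_1}$ acts on a cuboid whose product of dimensions is already $k_2 \prod_r n_r$, which is precisely where the extra $k_2$ enters in the first summand and matches the base-$k_2$ identity above. Once this is tracked correctly, the proof reduces to the short linear-algebraic computation sketched here.
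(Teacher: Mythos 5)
Your proposal is correct and follows essentially the same route as the paper's proof: apply $\cbo_{j,k_2}$ then $\cbo_{j,k_1}$, factor $1_{[n+(k_2-1)n_j e_j]} = 1_{k_2} \otimes_j 1_{[n]}$, expand by bilinearity, and invoke Lemma \ref{lprass} together with the base-$k_2$ identity $k_2\,\apv{k_1} \otimes 1_{k_2} + 1_{k_1} \otimes \apv{k_2} = \apv{k_1 k_2}$ to collapse the result into $\cbo_{j,k_1 k_2}(M)$. The only cosmetic difference is that you state and justify this combinatorial identity explicitly, whereas the paper uses it silently in the final step of the chain of equalities.
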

\begin{proof}
Let
$M \in {\mathbb N}_0^n,$
$n \in {\mathbb N}^m.$
Then
\begin{align}
 \cbo_{j,k_1} \circ \cbo_{j,k_2} (M) &= \cbo_{j,k_1} \left(\left(\prod_{r=1}^m n_r \right) \apv{k_2} \otimes_j 1_{[n]} + 1_{k_2} \otimes_j M \right)
\nonumber\\
 &= \left(\prod_{r=1}^m n_r \right) k_2\,\apv{k_1} \otimes_j 1_{[n + (k_2-1)n_j e_j]}
\nonumber\\
 &\qquad
 + 1_{k_1} \otimes_j \left(\left(\prod_{r=1}^m n_r \right) \apv{k_2} \otimes_j 1_{[n]} + 1_{k_2} \otimes_j M \right)
\nonumber\\
 &= \left(\prod_{r=1}^m n_r \right) (k_2\,\apv{k_1} \otimes 1_{k_2} + 1_{k_1} \otimes \apv{k_2}) \otimes_j 1_{[n]} + (1_{k_1} \otimes 1_{k_2}) \otimes_j M
\nonumber\\
 &= \left(\prod_{r=1}^m n_r \right) \apv{k_1 k_2} \otimes_j 1_{[n]} + 1_{k_1 k_2} \otimes_j M
 = \cbo_{j,k_1 k_2} (M),
\nonumber\end{align}
using Lemma \ref{lprass} in the penultimate line.
\end{proof}
Applying this setup in conjunction with Theorem \ref{tbasicfact}, we can deduce the following structure theorem for
principal reversible cuboids.
\begin{theorem}\label{tcbuild}
Let
$m \in {\mathbb N},$
$n \in {\mathbb N}^m$
and
$M \in {\mathbb N}_0^n$
a principal reversible cuboid.
Then there is a number
$L \in {\mathbb N}$
and numbers
$j_l \in \{1, \dots, m\},$
$f_l \in {\mathbb N}+1$
$(l \in \{1, \dots, L\})$
such that
$\prod_{j_l = j} f_l = n_j$
$(j \in \{1, \dots, m\})$
and
\begin{align}
 M &= \cbo_{j_L, f_L} \circ \cbo_{j_{L-1}, f_{L-1}} \circ \cdots \circ \cbo_{j_1, f_1} ((0)),
\label{ecbuild}\end{align}
where
$(0) \in {\mathbb N}_0^{1_m}$
is the trivial principal reversible cuboid.
\hfill\break
Without loss of generality, we can assume that
$j_l \neq j_{l-1}$
$(l \in \{2, \dots, L\}).$
\end{theorem}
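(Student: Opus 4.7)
The plan is to prove Theorem \ref{tcbuild} by strong induction on the total size $\prod_{j=1}^m n_j$ of the principal reversible cuboid $M$, using Theorem \ref{tbasicfact} to peel off one building operator at each step. The base case is $\prod_j n_j = 1$, when $M = (0)$ is the trivial cuboid: we then take $L = 0$, the empty composition is the identity, and each empty product $\prod_{j_l = j} f_l$ equals $1 = n_j$ as required.

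For the inductive step, assume the claim holds for all principal reversible cuboids of strictly smaller total size, and let $M \in {\mathbb N}_0^n$ with $\prod_j n_j > 1$. By Theorem \ref{tbasicfact}, there is $j \in \{1,\dots,m\}$ and $\tilde n \in {\mathbb N}^m$ with $\tilde n_i = n_i$ for $i \neq j$, $\tilde n_j \mid n_j$, $\tilde n_j < n_j$, such that $M_{[\tilde n]}$ is a principal reversible subcuboid and
\begin{align}
M_{\hat n + l \tilde n_j e_j} &= M_{\hat n} + l \prod_{r=1}^m \tilde n_r
\qquad (\hat n \le \tilde n,\ l \in \Apv{n_j/\tilde n_j}).
\nonumber\end{align}
Set $f_L := n_j / \tilde n_j \in {\mathbb N}+1$ and $j_L := j$. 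I then want to verify the identity $M = \cbo_{j_L, f_L}(M_{[\tilde n]})$ by matching entries: for $\hat n \le \tilde n$ and $l \in \ap{f_L}$, the definition of the building operator gives
\begin{align}
\bigl(\cbo_{j_L, f_L}(M_{[\tilde n]})\bigr)_{\hat n + l \tilde n_j e_j} &= \Bigl(\prod_{r=1}^m \tilde n_r\Bigr) l + (M_{[\tilde n]})_{\hat n},
\nonumber\end{align}
which is precisely $M_{\hat n + l \tilde n_j e_j}$ by the displayed formula from Theorem \ref{tbasicfact}.

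Since $M_{[\tilde n]}$ has strictly fewer entries than $M$, the induction hypothesis supplies $L' \ge 0$, indices $j_1, \dots, j_{L'}$ and factors $f_1, \dots, f_{L'}$ with $M_{[\tilde n]} = \cbo_{j_{L'},f_{L'}} \circ \cdots \circ \cbo_{j_1, f_1}((0))$ and $\prod_{j_l = i,\, l \le L'} f_l = \tilde n_i$ for every $i$. Appending $(j_L, f_L)$ and taking $L := L' + 1$ yields the representation (\ref{ecbuild}) for $M$. The factorisation condition is easily checked: for $i \neq j$ it is unchanged and equals $\tilde n_i = n_i$, while for $i = j$ the new product is $\tilde n_j \cdot f_L = n_j$.

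It remains to arrange that $j_l \neq j_{l-1}$ for all $l \in \{2, \dots, L\}$. This is immediate from Lemma \ref{lcborepeat}: whenever two consecutive operators act in the same direction, $\cbo_{j_l, f_l} \circ \cbo_{j_{l-1}, f_{l-1}} = \cbo_{j_l,\, f_l f_{l-1}}$, so we merge them into a single operator with factor $f_l f_{l-1} \in {\mathbb N}+1$; this preserves both the representation of $M$ and the factorisation condition, and reduces $L$ by one. Iterating yields the normal form. The only step requiring real work is the entry-by-entry identification $M = \cbo_{j_L, f_L}(M_{[\tilde n]})$, and this is essentially a transcription of the stacking formula from Theorem \ref{tbasicfact} into the language of the building operator; everything else is induction bookkeeping and an appeal to Lemma \ref{lcborepeat}.
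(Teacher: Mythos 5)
Your proof is correct and follows essentially the same route as the paper: peel off one building operator via Theorem \ref{tbasicfact} (the paper simply ``paraphrases'' the stacking formula as $M = \cbo_{j,f}(M_{[\tilde n]})$, where you verify the entry-by-entry match explicitly), recurse on the subcuboid, and normalise with Lemma \ref{lcborepeat}. The only nitpick is notational: the index range in your displayed formula should be the set $\ap{n_j/\tilde n_j}$ rather than the arithmetic progression vector.
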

\begin{proof}
Using the building operator defined above, the statement of Theorem \ref{tbasicfact}
can be paraphrased in the following way. There is some
$j \in \{1, \dots, m\}$
and
$f \in {\mathbb N},$
$f | n_j,$
such that
$M = \cbo_{j,f} (M_{[\tilde n]}),$
where
$\tilde n \in {\mathbb N}^m$
has entries
$\tilde n_i = n_i$
$(i \neq j)$
and
$\tilde n_j = n_j / f.$
Here
$M_{[\tilde n]}$
is again a principal reversible cuboid. Unless this is the trivial cuboid
$(0) \in {\mathbb N}_0^{1_m},$
we can again apply Theorem \ref{tbasicfact} to it, and thus recursively obtain
the building operator chain in (\ref{ecbuild}).
The last statement reflects Lemma \ref{lcborepeat},
which allows fusion of consecutive building operators in the same direction
into one.
\end{proof}
Theorem \ref{tcbuild} shows that principal reversible cuboids are obtained from
building operator chains; the coefficients of such a chain arise from
factorising the individual dimensions
$n_j$
$(j \in \{1, \dots, m\})$
of the principal reversible cuboid, and arranging the factors in a sequence
such that
consecutive factors in the sequence belong to different
coordinate directions.
\par
Note that in the special (and untypical) case
$m = 2,$
this condition (which corresponds to the last sentence in Theorem \ref{tcbuild})
enforces alternation of directions
$j_1 = 1,$
$j_2 = 2,$
$j_3 = 1,$
$j_4 = 2,$
etc.\ (or the analogue starting with $j_2 = 2$), ending with either the same or
the other direction depending on whether
$L$
is odd or even.
If
$n_1 = n_2$
and we start with
$j_1 = 1,$
this gives a building operator chain equivalent to Ollerenshaw and Br\'ee's
construction of principal reversible squares \cite{rOB}.
However, if
$m > 2,$
then the possible patterns are considerably more complex.
\par\medskip
{\it Definition.}
Let
$m \in {\mathbb N}$
and
$n \in {\mathbb N}^m.$
Then we call
\begin{align}
 ((j_1, f_1), (j_2, f_2), \dots, (j_L, f_L)) \in (\{1, \dots, m\} \times ({\mathbb N}+1))^L,
\nonumber\end{align}
where
$L \in {\mathbb N},$
a
{\it joint ordered factorisation\/}
of
$n = (n_1, \dots, n_m)$
if
\begin{align}
 \prod_{j_l = j} f_l &= n_j \qquad (j \in \{1, \dots, m\})
\nonumber\end{align}
and
$j_l \neq j_{l-1}$
$(l \in \{2, \dots, L\}).$
\par\medskip\noindent
By Theorem \ref{tssten}, the entries on the coordinate axes of a principal
reversible cuboid form a sum system (with the entries of each component set
appearing in increasing order on the corresponding axis, and the coordinate
axes arranged in the order of the smallest non-zero entry of the component
sets).
Thus the building operator chain of Theorem \ref{tcbuild} also gives rise to a
construction for the corresponding sum system, as follows.
\begin{theorem}\label{tssbuild}
Let
$m \in {\mathbb N}.$
Suppose the sets
$A_1, A_2, \dots, A_m \subset {\mathbb N}_0$
form a sum system.
Let
$n_j := |A_j|$
$(j \in \{1, \dots, m\}).$

Then there is a joint ordered factorisation
$((j_1, f_1), \dots, (j_L, f_L))$
of
$(n_1, \dots, n_m)$
such that
\begin{align}
 A_j &= \sum_{j_l = j} \left(\prod_{s=1}^{l-1} f_s \right) \ap{f_l}
 \qquad (j \in \{1, \dots, m\}).
\label{essbuild}\end{align}
Conversely, given any joint ordered factorisation of
$n \in {\mathbb N}^m,$
$(\ref{essbuild})$
generates an $m$-part sum system.
\end{theorem}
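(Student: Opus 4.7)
The plan is to lift the statement to the level of principal reversible cuboids via Corollary \ref{cssprc} and then invoke Theorem \ref{tcbuild}, tracking how the coordinate axis entries evolve under each building operator $\cbo_{j_l, f_l}$. For the forward direction, start from the sum system $A_1, \dots, A_m$, let $M \in {\mathbb N}_0^n$ be the corresponding principal reversible cuboid (with $n_j = |A_j|$), and by Theorem \ref{tcbuild} write
\[
 M = \cbo_{j_L, f_L} \circ \cdots \circ \cbo_{j_1, f_1}((0)),
\]
with $j_l \neq j_{l-1}$ and $\prod_{j_l = j} f_l = n_j.$ Since by Theorem \ref{tssten} the component set $A_j$ coincides with the entries on the $j$th coordinate axis of $M$, it suffices to compute the axis entries after the chain.

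The key computation is the following invariance: if $\tilde M \in {\mathbb N}_0^{\tilde n}$ and $\tilde A_i$ denotes the entries on the $i$th coordinate axis of $\tilde M$, then from the definition of $\cbo_{j, f}$,
\[
 (\cbo_{j,f}(\tilde M))_{1_m + (l \tilde n_j + l') e_j} = \left(\prod_{r=1}^m \tilde n_r\right) l + \tilde M_{1_m + l' e_j} \qquad (l \in \ap f, \ l' \in \ap{\tilde n_j}),
\]
while for $i \neq j$ the axis $i$ entries are unchanged. Hence after applying $\cbo_{j,f}$ the new $j$th axis set equals $\bigl(\prod_r \tilde n_r\bigr)\ap f + \tilde A_j$, and the other axis sets are untouched. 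Starting from the trivial cuboid (where every axis set is $\{0\}$) and iterating, after step $l$ the factor $\prod_r n_r$ equals $\prod_{s=1}^{l-1} f_s$. An induction on $l$ therefore yields
\[
 A_j = \sum_{l : j_l = j} \left(\prod_{s=1}^{l-1} f_s\right) \ap{f_l},
\]
which is exactly (\ref{essbuild}).

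For the converse, given a joint ordered factorisation, I would build the cuboid $M := \cbo_{j_L, f_L} \circ \cdots \circ \cbo_{j_1, f_1}((0))$ and show by induction on the length of the chain that at every stage the result is a principal reversible cuboid. The inductive step unpacks the definition of $\cbo_{j,f}$: the stacking produces $f$ shifted copies of the previous cuboid offset by multiples of its entry count, so the entry set jumps from $\ap{\prod_r \tilde n_r}$ to $\ap{f \prod_r \tilde n_r}$; property (V) is preserved because the shifts are constant in the non-$j$ directions; and the strict increase along the $j$th direction holds because the maximum of each block is less than the minimum of the next (this is precisely the pattern described in the last statement of Theorem \ref{tbasicfact}). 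Once $M$ is recognised as a principal reversible cuboid, Theorem \ref{tssten} gives the sum system, and the axis computation above shows its components are the sets on the right-hand side of (\ref{essbuild}).

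The main obstacle is not conceptual but bookkeeping: one must carefully verify that each $\cbo_{j,f}$ preserves the three defining properties of a principal reversible cuboid, and match the indexing $P_l = \prod_{s=1}^{l-1} f_s$ between the evolving dimension product and the formula (\ref{essbuild}). The restriction $j_l \neq j_{l-1}$ plays no role in the forward computation (Lemma \ref{lcborepeat} shows consecutive same-direction operators could always be fused), but it is essential for the bijective correspondence with joint ordered factorisations and hence for uniqueness considerations; for the present existence statement, it can simply be assumed by that fusion.
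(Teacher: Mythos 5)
Your proposal is correct and follows essentially the same route as the paper: pass to principal reversible cuboids via Corollary \ref{cssprc}, invoke Theorem \ref{tcbuild} for the building operator chain, and track the coordinate-axis entry sets through each application of $\cbo_{j_l,f_l}$, which is exactly the paper's recursion $A_{j_l}^{(l)} = A_{j_l}^{(l-1)} + \bigl(\prod_{s=1}^{l-1} f_s\bigr)\ap{f_l}$. Your treatment of the converse (verifying that $\cbo_{j,f}$ preserves the entry-set, (V), and monotonicity properties) is somewhat more explicit than the paper's, which leaves that direction implicit, but it is the same underlying argument.
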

\begin{proof}
By Corollary \ref{cssprc}, there is a one-to-one relationship between $m$-part
sum systems and principal reversible $m$-cuboids with dimension vector
$n \in ({\mathbb N}+1)^m.$
Consider a principal reversible cuboid
$M \in {\mathbb N}_0^n$
and its corresponding building operator chain (\ref{ecbuild}).
Set
$M^{(0)} = (0) \in {\mathbb N}_0^{1_m}$
and
$M^{(l)} = \cbo_{j_l, f_l} \circ \dots \circ \cbo_{j_1, f_1} (0)$
$(l \in \{1, \dots, L\});$
then
$M = M^{(L)}$
and
$M^{(l)} = \cbo_{j_l, f_l}(M^{(l-1)})$
$(l \in \{1, \dots, L\}).$
By recursion, the number of entries of
$M^{(l)}$
(which is equal to the product of its dimensions) will be
$F_l := \prod_{s=1}^l f_s.$
\par
Let
$A_j^{(l)}$
be the set of entries on the $j$-th coordinate axis of
$M^{(l)},$
for any
$j \in \{1, \dots, m\}$
and
$l \in \{0, \dots, L\}.$
Then we find that
$A_j^{(0)} = \{0\}$
for all
$j \in \{1, \dots, m\},$
and that
$A_j^{(l)} = A_j^{(l-1)}$
if
$j \neq j_l,$
and
\begin{align}
 A_{j_l}^{(l)} &= A_{j_l}^{(l-1)} \cup (A_{j_l}^{(l-1)} + F_{l-1}) \cup (A_{j_l}^{(l-1)} + 2 F_{l-1}) \cup \cdots \cup (A_{j_l}^{(l-1)} + (f_l-1) F_{l-1})
\nonumber\\
 &= A_{j_l}^{(l-1)} + \left(\prod_{s=1}^{l-1} f_s \right) \ap{f_l};
\label{essunion}\end{align}
note that, since
$M^{(l-1)}$
is a principal reversible cuboid,
\begin{align}
 F_{l-1} &= \prod_{s=1}^{l-1} f_s = \sum_{r=1}^m \max A_r^{(l-1)} + 1 > \max A_{j_l},
\nonumber\end{align}
so the union in (\ref{essunion}) is a union of disjoint sets.
The formula (\ref{essbuild}) follows by recursion, as
$A_j = A_j^{(L)}.$
\end{proof}
We conclude with some examples to illustrate the workings of Theorem \ref{tssbuild} to construct
sum systems, and further the use of Theorems \ref{tSdsSs1}, \ref{tSdsSs2} to obtain corresponding
sum-and-distance systems.
\par\medskip
{\it Example 1.\/}
Take
$m = 3,$
$n = (15, 8, 6)$
and consider the joint ordered factorisation
\begin{align}
 ((1,5),(2,2),(1,3),(3,3),(2,2),(3,2),(2,2));
\nonumber\end{align}
then, by equation (\ref{essbuild}), we find the corresponding sum system
\begin{align}
 A_1 = &\{0,1,2,3,4,10,11,12,13,14,20,21,22,23,24\},
\nonumber\\
 A_2 = &\{0,5,90,95,360,365,450,455\},
\nonumber\\
 A_3 = &\{0,30,60,180,210,240\},
\nonumber\end{align}
which generates all integers
$0, 1, \dots, 6!-1$
exactly once.
Rearranging the same factors in a different joint ordered factorisation,
\begin{align}
 ((1,5),(3,3),(2,2),(3,2),(2,2),(1,3),(2,2)),
\nonumber\end{align}
we obtain a different sum system with component sets of the same cardinalities
$n_1, n_2, n_3,$
and the same target set,
\begin{align}
 A_1 = &\{0,1,2,3,4,120,121,122,123,124,240,241,242,243,244\},
\nonumber\\
 A_2 = &\{0,15,60,75,360,375,420,435\},
\nonumber\\
 A_3 = &\{0,5,10,30,35,40\}.
\nonumber\end{align}
\par\medskip
{\it Example 2.\/}
Consider
$m = 3$
and
$n = (14,8,6),$
all even. Then the joint ordered factorisation
\begin{align}
 ((1,2),(3,3),(2,2),(3,2),(2,2),(1,7),(2,2))
\nonumber\end{align}
gives the corresponding sum system
\begin{align}
 \tilde A_1 &= \{0,1,48,49,96,97,144,145,192,193,240,241,288,289\},
\nonumber\\
 \tilde A_2 &= \{0,6,24,30,336,342,360,366\},
\nonumber\\
 \tilde A_3 &= \{0,2,4,12,14,16\},
\nonumber\end{align}
and hence, by Theorem \ref{tSdsSs1}, the non-inclusive sum-and-distance system
\begin{align}
 A_1 = \{1, 95, 97, 191, 193, 287, 289\},
 A_2 = \{306, 318, 354, 366\},
 A_3 = \{8, 12, 16\}.
\nonumber\end{align}
\par\medskip
{\it Example 3.\/}
Consider
$m = 3$
and
$n = (15, 7, 9),$
all odd. Then the joint ordered factorisation
\begin{align}
 ((1,5),(2,7),(3,3),(1,3),(3,3))
\nonumber\end{align}
generates the sum system
\begin{align}
 \tilde A_1 &= \{0,1,2,3,4,105,106,107,108,109,210,211,212,213,214\},
\nonumber\\
 \tilde A_2 &= \{0,5,10,15,20,25,30\},
\nonumber\\
 \tilde A_3 &= \{0,35,70,315,350,385,630,665,700\},
\nonumber\end{align}
and further, by Theorem \ref{tSdsSs2}, the inclusive sum-and-distance system
\begin{align}
 A_1 = \{1, 2, 103, 104, 105, 106, 107\},
 A_2 = \{5, 10, 15\},
 A_3 = \{35, 280, 315, 350\}.
\nonumber\end{align}
\par\medskip
{\it Example 4.\/}
For
$m = 5$
and
$n = (28,20,30,18,12),$
the joint ordered factorisation
\begin{align}
 ((1,7),(2,4),(5,2),(3,2),(4,2),(2,5),(4,9),(3,3),(1,4),(5,3),(3,5),(5,2))
\nonumber\end{align}
gives, by formula (\ref{essbuild}), the five-part sum system
\begin{align}
 A_1 &=
 \{0,1,2,3,4,5,6,30240,30241,30242,30243,30244,30245,30246, 60480,
\nonumber\\
 &\qquad 60481,60482,60483,60484,60485,60486,90720,90721,90722,90723,90724,
\nonumber\\
 &\qquad 90725,90726\},
\nonumber\\
 A_2 &=
 \{0,7,14,21,224,231,238,245,448,455,462,469,672,679,686,693,896,903,
\nonumber\\
 &\qquad 910,917\},
\nonumber\\
 A_3 &=
 \{0,56,10080,10136,20160,20216,362880,362936,372960,373016,383040,
\nonumber\\
 &\qquad 383096,725760,725816,735840,735896,745920,745976,1088640,1088696,
\nonumber\\
 &\qquad 1098720,1098776,1108800,1108856,1451520,1451576,1461600,1461656,
\nonumber\\
 &\qquad 1471680,1471736\},
\nonumber\\
 A_4 &=
 \{0,112,1120,1232,2240,2352,3360,3472,4480,4592,5600,5712,6720,6832,
\nonumber\\
 &\qquad 7840,7952,8960,9072\},
\nonumber\\
 A_5 &=
 \{0,28,120960,120988,241920,241948,1814400,1814428,1935360,1935388,
\nonumber\\
 &\qquad 2056320,2056348\},
\nonumber\end{align}
which generates the integers
$0, 1, 2, \dots, 10!-1,$
each exactly once.

\end{document}